\newtheorem{theorem}{Theorem}[section]
\newtheorem{corollary}{Corollary}
\newtheorem{lemma}[theorem]{Lemma}
\newtheorem{proposition}{Proposition}
\newtheorem{definition}[theorem]{Definition}
\newtheorem{remark}{Remark}
\def\remspace{\kern-0.5em}
\newcommand{\CP}[1]{{\color{red}#1}}
\newcommand{\supp}{\mathrm{supp}}
\renewcommand{\d}{\mathrm{d}}
\newcommand{\ddt}{\frac{\d}{\d t}}
\newcommand{\sign}{\mathrm{sign}}
\newcommand\norm[1]{\left\lVert#1\right\rVert}
\newcommand{\ci}{c_\gamma^{(i)}}
\newcommand{\cone}{c_\gamma^{(1)}}
\newcommand{\ctwo}{c_\gamma^{(2)}}
\newcommand{\conex}{\frac{\partial c_\gamma^{(1)}}{\partial x}}
\newcommand{\ctwox}{\frac{\partial c_\gamma^{(2)}}{\partial x}}
\newcommand{\none}{n_\gamma^{(1)}}
\newcommand{\ntwo}{n_\gamma^{(2)}}
\newcommand{\px}{\dfrac{\partial p_\gamma}{\partial x}}
\newcommand{\pxx}{\dfrac{\partial^2 p_\gamma}{\partial  x^2}}
\newcommand{\p}{p_\gamma}
\newcommand{\n}{n_\gamma}
\newcommand{\w}{w_\gamma}
\newcommand{\e}{\varepsilon}
\newcommand{\partialt}[1]{\dfrac{\partial#1}{\partial t}}
\newcommand{\partialx}[1]{\dfrac{\partial#1}{\partial x}}
\newcommand{\fpartial}[1]{\dfrac{\partial}{\partial #1}}
\newlength\eqnspace
\newcommand{\R}{\mathbb{R}}
\renewcommand{\neg}[1]{\left|#1\right|_-}
\newcommand{\pos}[1]{\left|#1\right|_+}
\newcommand{\abs}[1]{\left\lvert#1\right\rvert}
\title[Hele-Shaw limit for a system of two reaction-(cross-)diffusion equations]{Hele-Shaw limit for a system of two reaction-(cross-)diffusion equations for living tissues}
\author{Federica Bubba$^\star$}
\author{Beno\^it Perthame$^\star$}
\address
{$^\star$Sorbonne Universit\'{e}, CNRS, Universit\'{e} Paris-Diderot SPC, Inria MAMBA Team, Laboratoire Jacques-Louis Lions, 4, pl. Jussieu, 75005 Paris, France.}
\author{Camille Pouchol$^\dag$}
\author{Markus Schmidtchen$^\ddag$}
\address{$^\dag$Department of Mathematics,
KTH - Royal institute of Technology}
\address{$^\ddag$Department of Mathematics, Imperial College London, SW7 2AZ London, UK}
\email[A1,A2]{federica.bubba@upmc.fr, benoit.perthame@upmc.fr, pouchol@kth.se}
\email[A1,A2]{m.schmidtchen15@imperial.ac.uk}
\begin{document}
\maketitle

\begin{abstract}
Multiphase mechanical models are now commonly used to describe living tissues including tumour growth. The specific model we study here consists of two equations of mixed  parabolic and hyperbolic type which extend the standard compressible porous medium equation, including cross-reaction terms. We study the  incompressible limit, when the pressure becomes stiff, which generates a free boundary problem. We establish  the complementarity relation and also a segregation result. 

Several major mathematical difficulties arise in the two species case. Firstly, the system structure makes comparison principles fail. Secondly, segregation and internal layers limit the regularity available on some quantities to BV. Thirdly, the Aronson-B\'enilan estimates cannot be established in our context. We are lead, as it is classical, to add correction terms. This procedure requires technical  manipulations based on BV estimates only valid in one space dimension. Another novelty is to establish an $L^1$ version in place of the standard upper bound.  
\end{abstract}

\noindent{\makebox[1in]\hrulefill}\newline
2010 \textit{Mathematics Subject Classification.} 35B45; 35K57;   35K65; 35Q92; 76N10;  76T99; 
\newline\textit{Keywords and phrases.} Aronson-Benilan estimate; Parabolic-Hyperbolic systems; Incompressible limit; Mathematical biology;

\section{Introduction}
Models of living tissues lead to the following evolution system that we consider in one space dimension, $x\in \R, \; t \geq 0$,
\begin{align}
\label{System}
\begin{cases}
\partialt \none  \remspace &= \fpartial x \left(\none\, \px\right) + \none F_1(\p) + \ntwo G_1(\p), \quad 
\\[\eqnspace]
\partialt \ntwo \remspace &= \fpartial x \left(\ntwo\, \px\right) + \none F_2(\p) + \ntwo G_2(\p),
\end{cases}
\end{align}
where the pressure $\p$ is given by the  law of state 
\begin{equation}
\p:=\big(\n \big)^\gamma, \quad \gamma>1, \qquad  \quad \n:=\none+\ntwo .
\end{equation}
We equip this system  with nonnegative initial data with compact support
\begin{equation}
\label{Initial}
n_\gamma^{(i)}(0,x) = n_{\gamma, \rm{init}}^{(i)}(x) \in L^1(\R), \; i=1,2.
\end{equation}
Here $\none,\, \ntwo$ denote the population densities, while $F_i, \; G_i$ model the reaction or growth  phenomena, which are assumed to depend exclusively on the pressure~$\p$ according to the observations in \cite{ByDr, RJPJ}. Throughout, we will call $G_1$ and $F_2$ the \textit{cross-reaction} terms. 

The purpose of our analysis is to study the Hele-Shaw limit for the solutions $\none$ and $\ntwo$ of system \eqref{System} and the pressure, namely their convergence when $\gamma$ tends to~$+\infty$. We establish both the compactness argument in order to pass to the limit in the nonlinear terms and  the limiting equations satisfied by the pressure and the densities.
\\[2mm]

\textbf{Motivation and earlier works.} 
Compressible mechanical models such as~\eqref{System} and variants have become ubiquitous in mathematical biology with applications in  modelling living tissues and tumour growth, among others. In the latter instance,  $\none$ and $\ntwo$ represent the cancer cells and quiescent or healthy cells, respectively, with different growth rates, and possible transitions from a state to the other. A question which arises here is to know as to whether segregation effects occur  between two or more species in the absence of cross-reaction terms~\cite{BCGR,Carrillo2017, Preziosi2012}.

Models of this type have been attracting attention for many decades, starting with epidemiological models, cf. \cite{BT83}. In its current form, however in the absence of any reaction terms, Eq.~\eqref{System} was proposed in the seminal paper by \cite{GP84} and an existence and segregation theory was given in  a series of papers, see~\cite{BGH87a} and the references therein.  Reaction terms were added and studied later,  the existence and segregation of solutions was established; see~\cite{BDPM10, BHIM12}.

Quite recently, the no-vacuum assumption on the initial data could be removed and thus the existence of (segregated) solutions on bounded intervals for a wider class of initial data could be shown using tools from optimal transportation; see \cite{Carrillo2017}. A little later, another existence result in higher dimensions was proposed in \cite{Gwiazda2018} by establishing strong compactness of the pressure gradient using a Aronson-B\'enilan type estimate without commenting on segregation results which are, however, expected to be true; see the general argument in Section~\ref{Section5}.

Taking the incompressible limit $\gamma \rightarrow +\infty$ has  attracted attention in the past in the one-species case, say, if $\ntwo = 0$,
\begin{equation*}
\partialt \n   = \fpartial x \left(\n\, \px\right) + \n F_1(\p).
\end{equation*}
The motivation for such asymptotics in cancer modelling is to bridge the gap between the mechanical, compressible model, \eqref{System}, and a commonly used different class of incompressible models, see the survey papers~\cite{LowengrubFrieboes_etal2010, Roose:07}. A major mathematical interest is the relation to geometric models. These are the so-called Hele-Shaw models, where the evolution of the tumour is described through the movement of the free boundary of a domain $\Omega(t)$ occupied by the tumour. In these models, the total  cell density can take up only the values $0$ and~$1$, where $1$ corresponds to the tumour.

Such results have been obtained successively in~\cite{Perthame2014} and followed by several others~\cite{Hecht2017, MoPeu, Perthame2014b}, for a different pressure law. The typical result is that the pressure $\p$ and the density $\n$ converge strongly to the limits $p_\infty$, $n_\infty$ that satisfy the so-called \textit{complementarity relation} (in the distributional sense):
\begin{equation}
\label{Complementarity_one}
p_\infty \left(\frac{\partial^2 p_\infty}{\partial x^2}  + F_1(p_\infty)\right) = 0,
\end{equation}
together with the weak form of the Hele-Shaw free boundary problem
\begin{align}
\label{Limit_one}
\begin{cases}
\partialt {n_{\infty}} =  \fpartial x \left(n_{\infty} \partialx {p_{\infty}} \right) + n_{\infty} F_1(p_{\infty}),
\\
p_\infty(1-n_\infty) = 0.
\end{cases}
\end{align}
A remarkable property is the uniqueness of the solution despite the weak relation between $n_\infty$ and $p_\infty$. The compressible model and the Hele-Shaw description of a tumour are linked through the set $\Omega(t) := \{x, \, p_{\infty}(t,x)>0\}$ which coincides a.e. with the set  $ \{n_{\infty} = 1\}$ and may therefore be considered as the tumour; see, \textit{e.g.},~\cite{MePeQu}.

Notice that the above approach to incompressible limit is not the only one. Methods based on viscosity solutions are also well-established for these growth problems~\cite {Kpo, KTu, KPS}. One can also mention that the incompressible limit is also called ``congestion" in crowd-motion, and a recent approach is based on optimal transportation arguments~\cite{MRS1, MRS2, MRSV}.

\begin{figure}
	\centering
	\includegraphics[scale=1]{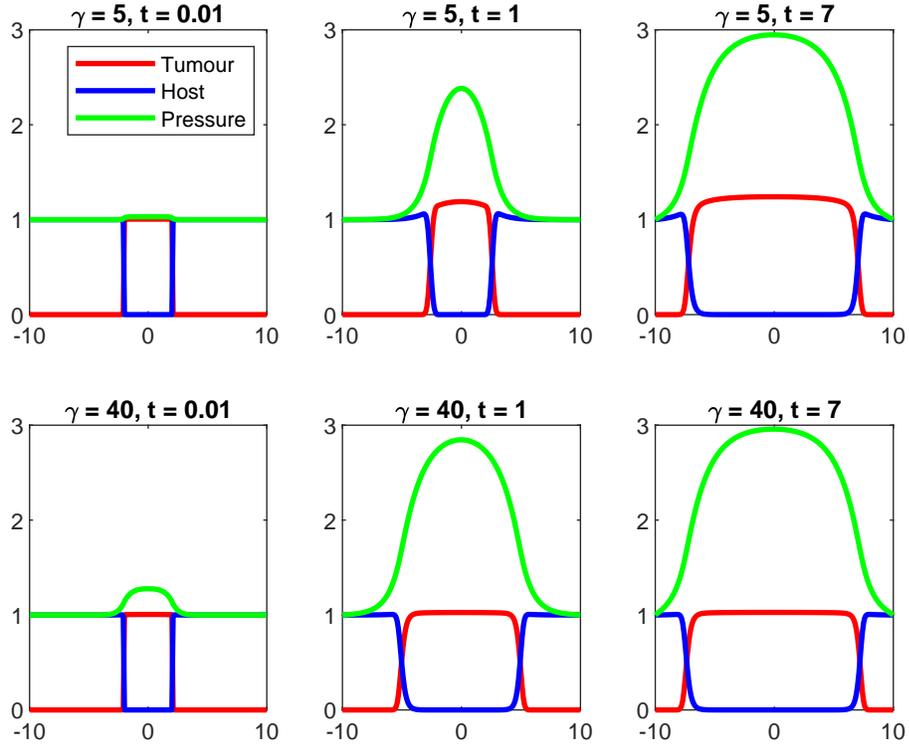}
	\vspace{-5mm}
	\caption{Comparison of evolution in time of solutions to system~\eqref{System} (blue and red line) and the corresponding pressure (green line) for different values of $\gamma$: $\gamma = 5$ (upper panels) and $\gamma = 40$ (lower panels).}
	\label{fig:comparison}
\end{figure}

\begin{figure}
	\centering
	\includegraphics[scale=0.8]{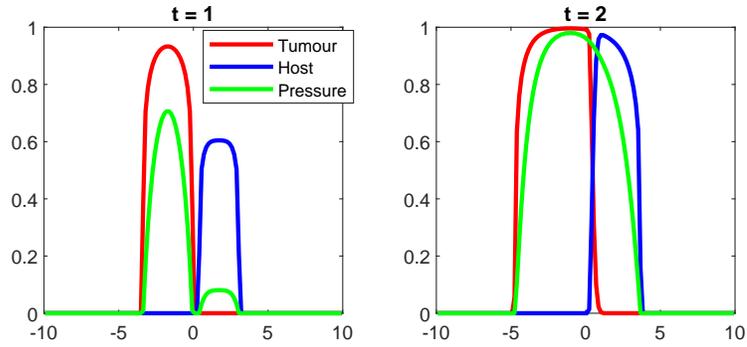}
	\vspace{-5mm}
	\caption{Solutions to~\eqref{System} and corresponding pressure in the presence of vacuum. The system is initialised with two indicator functions with supports at a positive distance. The densities remain segregated, in agreement with known results on segregated solutions (see, \textit{e.g.}, \cite{Carrillo2017}).}
	\label{fig:vacuum}
\end{figure}

The evolution of solutions to system \eqref{System} and the corresponding pressure are shown, for two values of $\gamma$, in Figure~\ref{fig:comparison}. Functions are $F_1(p) = (1-\tfrac{p}{K})$, with $K = 3$, $G_2(p) = (1-p)$ and $G_1(p) = F_2(p) = (1-p)_+$. In this case, $\none$ (tumour cells, red line) is the population with the greatest carrying capacity $K$ and it invades the other one (host cells, blue line). In Figure~\ref{fig:vacuum}, the parameters are $\gamma = 5$, $F_1(p) = 2(1-p)$ and $G_2(p) = (1-p)$, there are no cross-reaction terms, and we let the total density vanish in some parts of the domain. The two densities are initially segregated and remain so (see \cite{Carrillo2017} and Section \ref{Section5}).\\[2mm]

\textbf{Specific difficulties.} 
Two kinds of discontinuities of different natures arise in the two cell population model. The first type is  on the total cell density $n_\infty$, as in the single cell type model, generating a  free boundary moving with the Stefan condition $v=-\frac{\partial p_\infty}{\partial x}$.  The second type  are internal jumps on $n^{(i)}_\infty$ keeping $n_\infty$ continuous and thus only bounded variation is expected; cf. \textit{e.g.} \cite{BHIM12, Carrillo2017} and references therein. They are a major hurdle  because they strongly constrain the possible \textit{a priori} estimates. Also,  they are naturally related to the important segregation property mentioned earlier. Note that the paper~\cite{Degond2018}, for a different pressure law, provides a deep understanding of the internal layer dynamics in the incompressible limit when the initial data is assumed to be initially segregated with a single contact point. This has the advantage of directly applying regularity results of \cite{BDPM10}.  In this present work, we do not rely on this type of regularity. Our analysis is of a different nature being based on the regularity obtained by an Aronson-B\'enilan type estimate which is related to the method in~\cite{Gwiazda2018}. Our approach also provides two notable extensions, the cross-reaction terms and  general initial data encompassing both overlap and vacuum.  
\\
 
Aronson-B\'enilan type estimate seems necessary. In the one species case it provides uniform  $L^1$ estimates on $\Delta \p$ which are needed to establish the complementarity relation. They face two issues in the present context.  Firstly, the correct quantity has to be manipulated since  $\Delta \p$ is not enough, and the need to find the appropriate functional is standard, see~\cite{VaVi2009}. Secondly, these estimates using an upper bound by the comparison principle are not adapted to systems and here we merely work in $L^1$ for the positive part of the appropriate functional. They additionally  provide estimates useful in  the absence of good regularity for the population fractions $c_\gamma^{(i)}:= n_{\gamma}^{(i)}/\n$, which come up naturally when writing the equation satisfied by the total population: 
\begin{align*}
    \partialt {\n} = \fpartial x\left( \n \px \right) + \n \cone F(\p) + \n \ctwo G(\p),
\end{align*}
where we used the short-hand notation
\begin{align*}
F := F_1 + F_2, \quad\text{and} \quad G := G_1 + G_2.
\end{align*}

The equation for the pressure also involves the fractions and reads
\begin{align*}
\partialt \p =\left|\px\right|^2 + \gamma \p \left[ \pxx +  R \right],  \qquad  R :=\cone F(\p) + \ctwo G(\p) .
\end{align*} 

Even for a fixed $\gamma$, the population fractions are ambiguously defined whenever $\n = 0$, a scenario that typically occurs in segregation. We shall see that the population fractions do not have a well-posed limit equation, at least with the available regularity results. 

We also emphasise that this problem adds up to the more classical difficulties arising from the system structure, namely that comparison principles are not available since the interaction is neither competitive nor cooperative.
\\[2mm]

\textbf{Main results.} 
Our aim is to prove convergence for $\none$, $\ntwo$, $\n$ and $\p$ on $\mathbb{R} \times (0,T)$, for all  $T>0$, and to state the limit equations which extend~\eqref{Complementarity_one}-\eqref{Limit_one} to two species. 

As is usual for these equations of porous medium type, the solution to \eqref{System} remains compactly supported for all times whenever the initial data are (which we will assume throughout); see~\cite{Vazquez2007}.

Then, the standard idea to deal with the lack of regularity coming from $\cone$, $\ctwo$ and the free boundary, is to shift the initial data as follows
\begin{align*}
    n_{0,\e}^{(i)}(x) = n_{0}^{(i)}(x) + \e.
\end{align*}
After such a regularisation, we prove that the density $n_{\gamma, \e}$ can no longer vanish, which implies that the quotients $c^{(1)}_{\gamma, \e}$ and $c^{(2)}_{\gamma, \e}$ are well defined. Furthermore, the equation for $n_{\gamma, \e}$ (at the $\e$~level) is satisfied in the strong sense. We also recover the classical feature that uniform upper bounds hold for both densities and for the pressure.

Building on this, we are in the position to prove several \textit{a priori} estimates required to obtain enough compactness to pass to the incompressible limit and prove the complementarity relation. The key estimate is concerned with $\tfrac{\partial^2 \p}{\partial x^2}$, as it happens to be crucial to get compactness for $\tfrac{\partial \p}{\partial x}$. For this, we will adapt an argument introduced for the porous medium equation by Aronson\&B\'enilan~\cite{Aronson1979}.

The following step is to pass to the limit $\e \rightarrow 0$, $\gamma \rightarrow +\infty$ to obtain the complementarity relation and the limit equations. In doing so, we do not obtain equations for the $c_\infty^{(i)}$, but we make sure that the defining relations $c_{\gamma, \e}^{(i)} n_{\gamma, \e} = n_{\gamma, \e}^{(i)}$ remain true at the limit on $\{n_\infty>0\}$. The resulting theorem is our main result and is stated informally below. 

\begin{theorem}[Complementarity relation]
	\label{Complementarity}
With the assumptions of Section~\ref{sec:assumptions}, we may pass to the limit in Eq.~\eqref{eq:p} to obtain the complementarity relation
        \begin{align} \label{complementarityR}
	p_\infty \left[\frac{\partial^2 p_\infty}{\partial x^2}  + n_\infty^{(1)}F(p_\infty) + n_\infty^{(2)}G(p_\infty) \right] = 0.
	\end{align}
Here $n_\infty^{(i)}$, $i=1,2$  and $p_{\infty}$ weakly satisfy the equations, with $n_{\infty} = n_{\infty}^{(1)} + n_{\infty}^{(2)}$,
	\begin{equation}
	\begin{cases}
	\partialt {n_{\infty}^{(1)}} = & \fpartial x \left(n_{\infty}^{(1)} \partialx {p_{\infty}} \right) + n_{\infty}^{(1)} F_1(p_{\infty}) + n_{\infty}^{(2)} G_1(p_{\infty}),\\
	\partialt {n_{\infty}^{(2)}} = & \fpartial x \left(n_{\infty}^{(2)} \partialx {p_{\infty}} \right) + n_{\infty}^{(1)} F_2(p_{\infty}) + n_{\infty}^{(2)} G_2(p_{\infty}), \\
	\qquad 0=&p_{\infty} (1-n_{\infty}) .
	\end{cases}
	\end{equation}	 		
\end{theorem}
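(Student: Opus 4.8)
The plan is to run the two-parameter regularise-and-pass-to-the-limit programme outlined above: work with the shifted data $n_{0,\e}^{(i)} = n_0^{(i)} + \e$, on which the densities stay strictly positive so that the fractions $c_{\gamma,\e}^{(i)}$ are well defined and the equations hold strongly; derive $\gamma$- and $\e$-uniform \textit{a priori} estimates; extract strongly convergent subsequences; and let $\e \to 0$, $\gamma \to +\infty$. The object to exploit is the pressure equation~\eqref{eq:p}, which after division by $\gamma$ reads
\begin{equation*}
\p\left[\partial_{xx}\p + R\right] = \frac{1}{\gamma}\,\partial_t \p - \frac{1}{\gamma}\left(\partial_x \p\right)^2, \qquad R = \cone F(\p) + \ctwo G(\p).
\end{equation*}
The entire argument is built so that the right-hand side vanishes as $\gamma \to +\infty$ while the left-hand side converges to the bracket in~\eqref{complementarityR}.

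The first task is to collect the uniform estimates. The uniform $L^\infty$ bound on $\p$ makes $\tfrac{1}{\gamma}\partial_t \p \to 0$ in the distributional sense, by integrating by parts in time against a test function; a uniform $L^2_{loc}$ bound on $\partial_x \p$, obtained from the standard energy estimate, makes $\tfrac{1}{\gamma}(\partial_x \p)^2 \to 0$ in $L^1_{loc}$. Together these kill the right-hand side. For the left-hand side one needs genuine compactness: writing $\p\,\partial_{xx}\p = \tfrac{1}{2}\partial_{xx}(\p^2) - (\partial_x \p)^2$, the product passes to the limit only if $\partial_x \p$ converges \emph{strongly} in $L^2_{loc}$. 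This is exactly where the Aronson--B\'enilan-type estimate enters and is, I expect, the main obstacle. In the two-species case the classical derivation---an $L^\infty$ lower bound on $\partial_{xx}\p$ via the comparison principle---is unavailable, because the cross-reaction structure is neither competitive nor cooperative. I would instead differentiate the equation for a suitably corrected quantity (of the form $\partial_{xx}\p$ plus lower-order corrections, in the spirit of~\cite{VaVi2009}) and establish only an $L^1$ bound on its positive part, closing the estimate with the one-dimensional BV bounds on $\cone,\ctwo$. Producing this $L^1$-in-place-of-$L^\infty$ estimate, and checking that the correction terms stay controlled in BV, is the technical heart of the whole proof.

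With $\partial_x \p \to \partial_x p_\infty$ strongly in $L^2_{loc}$ and $\p \to p_\infty$ a.e. in hand, the remaining limits are weak--strong pairings. In each density equation the flux $n_\gamma^{(i)}\,\partial_x \p$ converges by pairing the weak-$\ast$ limit of $n_\gamma^{(i)}$ with the strong limit of $\partial_x \p$, and the reaction terms converge through the a.e. convergence of $\p$; this gives the weak form of the limiting system. The saturation relation $p_\infty(1-n_\infty)=0$ follows from $\p = \n^\gamma$ in the usual way: on $\{\n<1\}$ the pressure collapses to $0$, so $p_\infty$ is supported in $\{n_\infty=1\}$. The complementarity relation is then obtained by passing to the limit in the displayed identity: the right-hand side vanishes and the left-hand side converges to $p_\infty\big[\tfrac{\partial^2 p_\infty}{\partial x^2} + R_\infty\big]$, where only the behaviour of the fractions on $\{p_\infty>0\}$ survives because they are multiplied by $\p$.

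The last and most delicate point is to remove the fractions, which are only BV and admit no well-posed limit equation. Rather than pass to a limit in $\cone,\ctwo$ directly, I would preserve the defining identities $c_{\gamma,\e}^{(i)}\,n_{\gamma,\e} = n_{\gamma,\e}^{(i)}$ and argue that they persist in the limit on $\{n_\infty>0\}$, so that $c_\infty^{(i)} = n_\infty^{(i)}$ there. Since $p_\infty$ is supported in $\{n_\infty = 1\}$, on that set $c_\infty^{(i)} = n_\infty^{(i)}$ and hence $p_\infty\,c_\infty^{(i)} = p_\infty\,n_\infty^{(i)}$. Substituting this into $\lim \p R$ replaces the fractions by the densities and yields exactly
\begin{equation*}
p_\infty \left[\frac{\partial^2 p_\infty}{\partial x^2} + n_\infty^{(1)}F(p_\infty) + n_\infty^{(2)}G(p_\infty)\right] = 0,
\end{equation*}
which is the claimed complementarity relation.
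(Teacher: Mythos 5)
Your proposal reproduces the paper's overall architecture faithfully: regularisation by $\e$, an Aronson--B\'enilan-type estimate in $L^1$ closed with one-dimensional BV bounds on the fractions, strong compactness of $\p$, $\partial_x \p$ and $n_{\gamma}^{(i)}$, division of Eq.~\eqref{eq:p} by $\gamma$ so that the right-hand side vanishes, and weak--strong pairing in the density equations. The one place where you genuinely diverge is the elimination of the fractions from the limit of the reaction term. The paper never takes a limit of $c_{\gamma,\e}^{(i)}$ for this purpose: it uses the exact algebraic identity $p_{\gamma,\e}\, c_{\gamma,\e}^{(i)} = p_{\gamma,\e}^{1-1/\gamma}\, n_{\gamma,\e}^{(i)}$, valid at finite $\gamma$ because $p_{\gamma,\e}=n_{\gamma,\e}^{\gamma}$, so the term $\p R_{\gamma,\e}$ is rewritten in terms of the densities \emph{before} the limit, and strong convergence of $\p$ and $n_{\gamma,\e}^{(i)}$ alone yields $p_\infty\bigl[n_\infty^{(1)}F(p_\infty)+n_\infty^{(2)}G(p_\infty)\bigr]$. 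You instead pass to the limit in the fractions and then use the saturation relation $p_\infty(1-n_\infty)=0$ together with the persistence of $c^{(i)}_\infty n_\infty = n^{(i)}_\infty$ to replace $p_\infty c_\infty^{(i)}$ by $p_\infty n_\infty^{(i)}$. Your route is viable --- the paper itself remarks, after its proof, that the fractions converge strongly and satisfy exactly these relations --- but it costs an extra compactness argument for $c_{\gamma,\e}^{(i)}$ (spatial BV from \eqref{eq:estimate_bv} plus time compactness extracted from the transport equations \eqref{eq:system_ratios}), and it makes the complementarity relation logically dependent on the saturation relation, whereas in the paper these are independent conclusions. The paper's identity is the more economical move.

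Two smaller corrections. First, the $L^1$ Aronson--B\'enilan estimate controls the \emph{negative} part of $w_{\gamma,\e}=\partial_{xx}\p + R_{\gamma,\e}$ (the classical estimate is a lower bound on $\Delta \p$), not the positive part as you wrote; the full bound $\gamma\int_0^T\!\int_\Omega \p\,|w_{\gamma,\e}|\,\d x\,\d t \leq C$ is then recovered by combining this with the identity $\gamma\int\!\int \p\, w_{\gamma,\e} = \int\!\int\bigl(\partial_t \p - |\partial_x\p|^2\bigr)$. Second, the a priori bounds alone do not give strong $L^2$ compactness of $\partial_x\p$: the paper still needs a genuine Fr\'echet--Kolmogorov argument, with space shifts controlled by $\sup_t \|\partial_{xx}\p\|_{L^1(\Omega)}\leq C$ and time shifts controlled by a mollification argument (mollifier scale $\delta=\sqrt h$) exploiting the uniform $L^1$ bound on $\partial_t \p$. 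Your sketch correctly identifies where the difficulty sits, but this compactness step is a substantive piece of the proof, not a corollary of the estimate itself.
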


We also establish that segregation is preserved at the limit in the absence of cross-reactions: 
if initially $n_\infty^{(1)} n_\infty^{(2)}$ = 0, it remains true for all further times. Finally and although we do not use them here, we derive some energy estimates which are gathered in Appendix \ref{Energy} for the sake of completeness.

Note that the equality in ~\eqref{complementarityR} holds both in the distributional sense and, for a.e. $t>0$, pointwise in $x$ because we establish that $p_\infty$ is continuous in space and $ \frac{\partial^2 p_\infty}{\partial x^2} $ is a bounded measure. 

\vspace{2mm}
\textbf{Outline of the paper.}
The rest of this paper is organised as follows. In Section~\ref{Section2} we first set up the problem and state the assumptions on the reaction terms as well as the initial data, and explain why we are handling compactly supported solutions. We then introduce the regularisation by $\e$ and prove that the total density is bounded away from $0$.  
Section~\ref{Section3}  is devoted to deriving all \textit{a priori} estimates necessary for compactness. Section~\ref{Section4}  is dedicated to the incompressible limit, culminating in Theorem \ref{Complementarity}. We then tackle the problem of segregation in Section~\ref{Section5}, where the analysis is complemented by numerical simulations showing the behaviour of solutions as $\gamma \to \infty$ and the propagation of segregation. We conclude the paper with Section~\ref{Section6}, reiterating the strategy we have employed and its limitation raising several open questions.

\section{Preliminaries and regularisation}
\label{Section2}

We introduce the assumptions we need in the study of system~\eqref{System}--\eqref{Initial} when $\gamma$ tends to $\infty$.

\subsection{Assumptions and initial data}
\label{sec:assumptions}
\label{Assumptions}
\begin{definition}[Feasible data]
\label{def:feasible_growth}
	We say that the growth functions $F_i, G_i$, $i=1,2$ are feasible if they satisfy
	\begin{enumerate}[(i)]
		\item $F_i, \, G_i \in C_b^1(\R_+,\R)$, $i = 1, 2$,
		\item for any $p\geq 0$ there holds 
		\begin{align*}
			F_1'(p), \, G_2'(p) < 0, \quad \text{as well as}\quad F_2'(p), \, G_1'(p) \leq 0,
		\end{align*}
		\item there exists $P_H>0$ such that for all $p\geq P_H$:
		 \begin{align*}
			 F_1(p), \, G_2(p) \leq 0, \quad \text{ as well as } \quad F_2(p), \, G_1(p) = 0,
		 \end{align*}	
		 \item there holds
		 \begin{align*}
			 F(0) = G(0).
		 \end{align*}
	\end{enumerate}
	Throughout the paper we refer to $P_H$ as the \textbf{homeostatic pressure}.
\end{definition}
In the definition above, $C_b^1(\R_+,\R)$ is the space of $C^1(\R_+,\R)$ functions with bounded derivatives.\\
The last equality is technical but  instrumental for the Aronson-B\'enilan estimates, as in~\cite{Gwiazda2018}[Theorem 2] about the stability of weak solutions with respect to the initial data, even though it is not used in the existence result of~\cite{Carrillo2017}.
\\[2mm]

We now gather the assumptions made on the initial data. First, we assume that the initial conditions are compactly supported in some $\Omega_0$ independent of~$\gamma$, namely
\begin{equation}
\label{InitialSupport}
\supp\left(n_{\gamma, \rm{init}}^{(i)}\right) \subset \Omega_0.
\end{equation}

We define the regularised initial data for $\e > 0$ as
\begin{align*}
	n_{\gamma,\e, \rm{init}}^{(i)} = n_{\gamma,\rm{init}}^{(i)} + \e \;\; \text{on } \R, \,\; i=1,2,
\end{align*}
and we make the following set of assumptions regarding how the initial data and $\gamma$, $\e$ are related. 

\begin{definition}[Well-prepared initial data]
\label{def:initial_data}

We say the initial data are well prepared if there exist $n_{\infty,\rm init}^{(1)}$, $n_{\infty, \rm init}^{(2)}$ in $L^1(\Omega_0)$, $\e_0>0$ and $C>0$ independent of both $\gamma>1$ and $\e \leq \e_0$, such that for $i=1,2$, and all $\gamma>1$, $\e\leq \e_0$ there holds
\begin{align}
\label{eq:init}
p_{\gamma,\e_0}(0) \leq P_H, \qquad  	\lim_{\gamma \to +\infty} \norm{n_{\gamma, \rm{init}}^{(i)}-n_{\infty, \rm init}^{(i)}}_{L^1(\Omega_0)} = 0, \qquad \norm{\frac{\partial n_{\gamma, \rm{init}}^{(i)}}{\partial x}}_{L^1(\Omega_0)} \leq C,
\end{align}

\begin{align}
\label{eq:technical_w}
\norm{\frac{\partial^2 p_{\gamma, \e}}{\partial x^2}(0)}_{L^1(\Omega_0)} \leq C,
\end{align}
	
\begin{align}
\label{eq:BVcond_init}
	\left\|\frac{n_{\gamma,\e, \rm{init}}^{(i)}}{n_{\gamma,\e, \rm{init}}^{(1)}+n_{\gamma,\e, \rm{init}}^{(2)}} \right\|_{\rm{BV}(\Omega_0)} \leq C.
\end{align}
\end{definition}

The first set of conditions~\eqref{eq:init} is standard and allows to recover a density at time $0$ when passing to the incompressible limit. The second condition~\eqref{eq:technical_w} is technical and will be required when deriving some \textit{a priori} estimates in Section~\ref{Section4}.
The last set of conditions~\eqref{eq:BVcond_init}
 appears rather technical at first glance, but it is a natural assumption, cf. also~\cite{Carrillo2017}, as it allows us to handle the points where both initial densities vanish, \textit{i.e.}, vacuum or the absence of any species.

Assuming feasible data, well-prepared and compactly supported initial conditions~\eqref{InitialSupport}, we know from ~\cite{Gwiazda2018}[Theorem 3], that system~\eqref{System} admits a global weak solution $\none, \ntwo$, $p \in L^\infty(\R \times (0,T))$,  for all $T>0$. More precisely, the pressure is shown to satisfy 
\begin{equation*}
    \px \in L^2(\R \times (0,T)), \qquad     \pxx \in L^\infty(0,T;L^1(\R)),
\end{equation*}
and the weak solutions are to be understood in the following sense: for all $\phi \in C^1_{\rm comp}(\R \times (0,T))$, $i=1,2$
\begin{equation}
\label{eq:weak}
\int_0^T \int_\R \left[ -\n^{(i)} \frac{\partial \phi}{\partial t} + \n^{(i)} \px \frac{\partial \phi}{\partial x} - \left(\n^{(1)}F_i(\p)+\n^{(2)}G_i(\p) \right)\phi \right] \, \d x \d t = \int_{\R} n_{\gamma,\rm{init}}^{(i)} \phi(0) \, \d x.
\end{equation}

\subsection{Compact support}

Let us start by a few remarks on notation. Throughout we write
\begin{align*}
	|x|_+ := \begin{cases}
		x, & x > 0,\\
		0, & x \leq 0,
	\end{cases}
	\qquad \text{as well as} \qquad
	|x|_- := \begin{cases}
		-x, & x < 0,\\
		0, & x \geq 0,
	\end{cases}
\end{align*}
in order to denote the positive part of $x\in\R$ and the negative part of $x$, respectively. In particular note that then $x = |x|_+ - |x|_-$ and $|x| = |x|_+ + |x|_-$. In the same fashion  we define the positive sign and negative sign
\begin{align*}
	\sign_+(x) := \begin{cases}
		1, & x > 0,\\
		0, & x \leq 0,
	\end{cases}
	\qquad \text{as well as} \qquad
	\sign_-(x) := \begin{cases}
		-1, & x < 0,\\
		0, & x \geq 0.
	\end{cases}
\end{align*}
Note in particular that $x \cdot \sign_{\pm}(x) = |x|_{\pm}$.

Before regularising, we prove that solutions are compactly supported for all times, a result which requires checking that our assumptions ensure that both densities remain nonnegative. 

\begin{proposition}[Nonnegativity of $\none$ and $\ntwo$]
\label{prop:nonnegativity}
There holds, for all $t \geq 0$, 
	\begin{align*}
    n_{\gamma}^{(1)}(t,x) \geq 0 \quad \text{and} \quad n_{\gamma}^{(2)}(t,x) \geq 0 .
	\end{align*}
\end{proposition}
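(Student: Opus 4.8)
The plan is to prove nonnegativity for the coupled system despite the failure of comparison principles, relying on the sign structure encoded in the feasibility assumptions. The essential difficulty is that the reaction terms couple the two densities through the cross-reaction terms $G_1$ and $F_2$, so one cannot treat each equation in isolation, and the standard maximum/comparison argument is unavailable precisely because the interaction is neither competitive nor cooperative.

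\textbf{Strategy.} I would work with the negative parts $\neg{\none}$ and $\neg{\ntwo}$ and aim to show that their sum (or an appropriate combination) stays zero, starting from zero at $t=0$ since the initial data are nonnegative. The key algebraic observation is that the cross-reaction terms have a \emph{sign}: assumption (ii) of Definition~\ref{def:feasible_growth} tells us $F_2, G_1 \leq 0$, but more usefully, $G_1$ multiplies $\ntwo$ in the first equation and $F_2$ multiplies $\none$ in the second, so on the set where a density is negative the cross-term driven by the \emph{other} (nonnegative) density cannot push it further negative in the damaging direction—this is the structural fact that must be extracted carefully. First I would test the first equation against $-\sign_-(\none)$ (equivalently, differentiate $\int \neg{\none}\,\d x$ in time, using $\partialt{\neg{\none}} = -\sign_-(\none)\,\partialt{\none}$), and likewise for the second, then add the two identities.

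\textbf{Key steps in order.} (1) Multiply each equation by the corresponding negative sign and integrate in $x$; the diffusion term $\fpartial x(\ni\,\px)$ tested against $-\sign_-(\ni)$ should yield, after integration by parts, a term with a favourable sign (the contribution $\int \neg{\ni}\,\pxx$ plus boundary-type terms that vanish or have the right sign), using the Kato-type inequality for the degenerate-diffusion structure. (2) For the reaction terms, bound $\none F_1(\p)$ and $\ntwo G_2(\p)$ against $\neg{\none}$ and $\neg{\ntwo}$ respectively using that $F_1, G_2 \in C_b^1$ are bounded; for the cross terms $\ntwo G_1(\p)$ and $\none F_2(\p)$, use $\none = \pos{\none} - \neg{\none}$ to isolate the part controlled by $\neg{\none}$ and argue the remaining piece (multiplied by a nonnegative density) has the correct sign or is dominated. (3) Assemble a Grönwall inequality of the form
\begin{align*}
	\ddt \int_\R \left( \neg{\none} + \neg{\ntwo} \right)\,\d x \leq C \int_\R \left( \neg{\none} + \neg{\ntwo} \right)\,\d x,
\end{align*}
where $C$ depends only on the uniform bounds on $F_i, G_i$ and their derivatives. (4) Since the right-hand side vanishes at $t=0$, Grönwall forces $\neg{\none} = \neg{\ntwo} = 0$ for all $t \geq 0$, which is the claim.

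\textbf{Main obstacle.} I expect the hard part to be step (1)–(2): controlling the diffusion term rigorously at the level of the negative part, since $\px$ is only $L^2$ in space and $\pxx$ is only a bounded measure, so the integration by parts and the Kato inequality must be justified in this low-regularity setting (likely via regularising $\sign_-$ by a smooth monotone approximation $\sign_-^\delta$ and passing to the limit), and simultaneously handling the cross-reaction terms so that the coupling closes into a single Grönwall estimate for the \emph{sum} of negative parts rather than for each separately. The sign conditions in Definition~\ref{def:feasible_growth} are exactly what is needed to ensure the cross-terms do not destroy this closure.
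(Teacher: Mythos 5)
Your strategy coincides with the paper's own proof: pass to the negative parts by multiplying each equation by the negative sign (the paper justifies this step with its Kruzhkov-type Lemma~\ref{lem:krushkov} and Remark~\ref{rem:krushkov}, which is exactly the smooth approximation of $\sign_-$ you anticipate in your "main obstacle" paragraph), decompose the density appearing in each cross term into positive and negative parts, and close a Gronwall inequality for $\int_\R \left( \neg{\none} + \neg{\ntwo} \right) \d x$ starting from zero. So the architecture is the right one.

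However, your stated sign for the cross-reaction terms is wrong, and it is precisely the sign on which the whole closure hinges. You write that assumption (ii) of Definition~\ref{def:feasible_growth} gives $F_2, G_1 \leq 0$. Assumption (ii) only concerns the derivatives, $F_2', G_1' \leq 0$; the correct conclusion, obtained by combining (ii) with (iii) ($F_2$ and $G_1$ are non-increasing and vanish for $p \geq P_H$), is the \emph{opposite} inequality: $F_2 \geq 0$ and $G_1 \geq 0$ on all of $\R_+$, with moreover $G_1(\p) \leq G_1(0)$ and $F_2(\p) \leq F_2(0)$. This nonnegativity is what makes your step (2) work: with the paper's convention $\sign_-(x) \in \{-1,0\}$, one gets
\begin{align*}
\ntwo G_1(\p)\,\sign_-\left(\none\right) = \pos{\ntwo}\, G_1(\p)\, \sign_-\left(\none\right) - \neg{\ntwo}\, G_1(\p)\,\sign_-\left(\none\right) \leq G_1(0)\, \neg{\ntwo},
\end{align*}
since the first piece is $\leq 0$ (favourable) and the second is bounded using $0 \leq -\sign_-(\none) \leq 1$ together with monotonicity of $G_1$. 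Had $G_1$ really been $\leq 0$, as you claim, the piece $\pos{\ntwo}\,G_1(\p)\,\sign_-(\none)$ would be nonnegative, i.e. it would drive $\neg{\none}$ upwards, and being proportional to $\pos{\ntwo}$ --- a quantity not controlled by the negative parts --- it could not be absorbed into the Gronwall inequality; the argument would fail exactly at your "has the correct sign or is dominated" step. With the sign corrected, the rest of your plan (diagonal terms bounded via $\|F_1\|_\infty$, $\|G_2\|_\infty$; the transport-diffusion term passing to the negative part as an exact divergence that integrates to zero; Gronwall from zero initial data) reproduces the paper's proof.
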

\begin{proof}
Dealing with a model as the porous medium equation, we can multiply the first equation in~\eqref{System} by $\sign_-\left(\none\right)$, to obtain
\begin{equation*}
\fpartial t \neg{\none} - \fpartial x \left( \neg{\none} \partialx{\p} \right) = \neg{\none} F_1(\p) + \ntwo  G_{1}(\p) \sign_-\left(\none\right),
\end{equation*}
by Lemma \ref{lem:krushkov} and Remark \ref{rem:krushkov}. Observe now that
\begin{align*}
	\ntwo  G_{1}(\p)\, \sign_{-}\left(\none\right) =   \left(\pos{\ntwo} -\neg{\ntwo} \right) G_{1}(\p)\, \sign_{-}\left(\none\right).
\end{align*}
Since $G_{1}(\cdot) \geq 0$, by Definition \ref{def:feasible_growth} we may write
\begin{align*}
	\ntwo  G_{1}(\p)\, \sign_{-}\left(\none\right)	&\leq -\sign_{-}\left(\none\right)\,  \neg{\ntwo} G_{1}(\p)\\
	&\leq G_{1}(0) \neg{\ntwo} , 
\end{align*}
where the last line is due to the fact that $G_1$ is decreasing in its argument, by Definition~\ref{def:feasible_growth}.
Thus, we may conclude
\begin{equation*}
\fpartial t \neg{\none} - \fpartial x \left( \neg{\none} \partialx{\p} \right) \leq \neg{\none} F_1(\p) + G_{1}(0) \neg{\ntwo}.
\end{equation*}
A similar computation for the second species yields
\begin{equation*}
\fpartial t \neg{\ntwo} - \fpartial x \left( \neg{\ntwo} \partialx{\p} \right) \leq F_2(0)\neg{\none}  + G_{2}(\p) \neg{\ntwo},
\end{equation*}
whence, upon adding both and integrating the sum in space, we obtain
\begin{equation*}
\ddt \int_{\R} \left( \neg{\none}+\neg{\ntwo} \right) \d x \leq C \int_{\R} \left( \neg{\none}+\neg{\ntwo} \right) \d x,
\end{equation*}
where $C$ only depends on the $L^\infty$-bounds of $F_i,G_i$, $i=1,2$. Applying Gronwall's lemma gives
\begin{equation*}
\int_{\R} \left( \neg{\none}+\neg{\ntwo} \right) \d x \leq 0,
\end{equation*}
and, thus, $\none(t,x) \geq 0$ and $\ntwo(t,x) \geq 0$ for all $t \geq 0$.
\end{proof}

We can now prove that solutions of~\eqref{System} are compactly supported for all times.
\begin{proposition} 
	\label{Compact}
For all $T>0$, there exists an open set $\Omega$ independent of $\gamma$ such that, 
	\begin{align*}
	\supp \, (\p(t)) \subset \Omega, \qquad \forall t \in [0,T] .
	\end{align*}
\end{proposition}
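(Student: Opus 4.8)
The plan is to reduce the statement to a single scalar porous-medium-type equation for the total density $\n=\none+\ntwo$ and then to dominate it by an explicit, compactly supported barrier. Summing the two equations in~\eqref{System} gives
\begin{align*}
\partialt \n = \fpartial x\left(\n\,\px\right) + \n R, \qquad R:=\cone F(\p)+\ctwo G(\p), \qquad \p=\n^\gamma,
\end{align*}
which, using $\px=\gamma\,\n^{\gamma-1}\,\partial_x\n$, is the porous medium equation $\partialt \n=\tfrac{\gamma}{\gamma+1}\partial_{xx}\!\left(\n^{\gamma+1}\right)+\n R$. By Proposition~\ref{prop:nonnegativity} the fractions satisfy $0\le\ci\le 1$ with $\cone+\ctwo=1$, so feasibility of the data yields the $\gamma$-independent bound $\abs{R}\le C_R:=\norm{F}_\infty+\norm{G}_\infty$. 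The decisive observation is that, although the full system admits no comparison principle, the equation for $\n$ \emph{alone} is scalar; hence the classical comparison principle for the porous medium equation with bounded reaction (see~\cite{Vazquez2007}) is available, and it suffices to build a compactly supported supersolution that dominates $\n$.

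Since $\n\ge 0$ and $R\le C_R$, the total density is a subsolution of the scalar equation $\partialt u=\tfrac{\gamma}{\gamma+1}\partial_{xx}(u^{\gamma+1})+C_R u$. First I would record the height bound: evaluating the pressure equation $\partialt \p=\abs{\px}^2+\gamma\p(\pxx+R)$ at a spatial maximum, where $\px=0$ and $\pxx\le0$, and invoking assumption~(iii) (so that $R\le0$ as soon as $\p\ge P_H$), gives $\norm{\p(t)}_\infty\le P_H$; together with the well-prepared data this also yields $\norm{\n(0)}_\infty\le\max(1,P_H)$, a bound independent of $\gamma$. Removing the reaction through the substitution $\n=e^{C_R t}\hat n$ turns the differential inequality into the pure porous-medium inequality $\partialt{\hat n}\le\tfrac{\gamma}{\gamma+1}e^{\gamma C_R t}\partial_{xx}(\hat n^{\gamma+1})$; I would then dominate $\hat n(0)=\n(0)$, supported in $\Omega_0$ and uniformly bounded, by a Barenblatt profile (equivalently, one may compare $\p$ with a travelling tent $\bar p=A(\xi(t)-\abs{x-x_c})_+$ in the pressure variable, whose slope $A$ is fixed from the initial data). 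Comparison then confines $\supp\,\n(t)$, and hence $\supp\,\p(t)=\supp\,\n(t)$, to an explicit interval for every $t\in[0,T]$.

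The main obstacle is to make this interval independent of $\gamma$, since the reaction enters the pressure equation with the apparently dangerous prefactor $\gamma\p R$. This is precisely resolved by the density formulation: after the time change $\tau(t)=\int_0^t\tfrac{\gamma}{\gamma+1}e^{\gamma C_R s}\,\d s$, the front of the Barenblatt supersolution advances only like $\tau^{1/(\gamma+2)}$, and although $\tau(T)$ is exponentially large in $\gamma$, its $(\gamma+2)$-th root stays bounded — indeed $\big(e^{\gamma C_R T}\big)^{1/(\gamma+2)}\to e^{C_R T}$ as $\gamma\to\infty$ — so the two $\gamma$-dependences compensate and the support radius remains bounded uniformly in $\gamma$ on $[0,T]$ (consistently with the finite Hele-Shaw speed in the limit). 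The remaining technical care is to verify that the barrier can be chosen to dominate the initial datum with $\gamma$-independent parameters, using only $\supp\,n_{\gamma,\mathrm{init}}^{(i)}\subset\Omega_0$ and $\norm{\n(0)}_\infty\le\max(1,P_H)$; then $\Omega$ is obtained by slightly enlarging the terminal interval. The identical argument applied to $n_{\gamma,\e}$ gives the same conclusion at the regularised level.
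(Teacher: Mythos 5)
Your overall strategy --- reduce to a scalar porous-medium inequality for $\n$ and dominate it by an explicit compactly supported barrier --- is legitimate and close in spirit to the paper's proof, which instead bounds $\none F(\p)+\ntwo G(\p)\le \n\max(F(\p),G(\p))$, concludes that $\p$ is a subsolution of the \emph{one-species} pressure equation with reaction $\max(F,G)$, and then simply cites the known uniform-in-$\gamma$ compact-support result for that equation \cite{Perthame2014}. The genuine gap in your write-up is that the step you postpone as ``remaining technical care'' --- fitting the barrier over the initial datum --- is exactly where the $\gamma$-uniformity must be proved, and with the choices you indicate it fails. For $\partial_\tau u=\partial_{xx}(u^{m})$, $m=\gamma+1$, the Barenblatt front sits at $\abs{x}=\sqrt{C/\kappa}\,\tau^{1/(m+1)}$ with $\kappa=\tfrac{m-1}{2m(m+1)}\sim\tfrac{1}{2\gamma}$; your compensation argument controls only the factor $\tau^{1/(m+1)}$, not the prefactor $\sqrt{C/\kappa}$. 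If the Barenblatt is fitted at a time-offset $\tau_0$ and with a constant $C$ that are independent of $\gamma$ (the target you state), then domination of $\hat n(0)$ on $[-L_0,L_0]$ forces, in pressure variables, $C-\kappa L_0^2\tau_0^{-2/(m+1)}\ge P_H\,\tau_0^{(m-1)/(m+1)}$, which is harmless, but the front radius is $\sqrt{C/\kappa}\ge\sqrt{2(\gamma+2)\,C}$: the Barenblatt pressure is a parabola of curvature $O(1/\gamma)$, so to reach height $\approx P_H$ over $[-L_0,L_0]$ its support must already have radius $O(\sqrt{\gamma})$. The resulting estimate on $\supp\,\n(T)$ is $O(\sqrt{\gamma})$, i.e.\ no $\gamma$-independent $\Omega$, which is the whole content of the proposition. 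Your parenthetical ``equivalent'' tent comparison is worse: the tent $A(\xi-\abs{x-x_c})_+$ is an exact solution of the rescaled pure-PME pressure equation whose front moves \emph{linearly} in $\tau$, and $\tau(T)\sim e^{\gamma C_R T}/(\gamma C_R)$, so it yields an exponentially large support; used on the original pressure equation instead, the uncontrolled term $\gamma\bar p R$, positive near the front where $\bar p<P_H$, destroys the supersolution inequality.

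The defect is repairable inside your scheme, but only with the idea that is missing: fit the Barenblatt at an exponentially small, hence $\gamma$-\emph{dependent}, offset, say $\tau_0=\delta^{\gamma+2}$ with $0<\delta<1$ fixed. The domination constraint becomes $C\ge\kappa L_0^2\delta^{-2}+P_H\delta^{\gamma}$, whence $\sqrt{C/\kappa}\le L_0\delta^{-1}+o(1)$ as $\gamma\to\infty$, and the final front sits at $\sqrt{C/\kappa}\,\bigl(\tau_0+\tau(T)\bigr)^{1/(\gamma+2)}\le C(L_0,\delta)\,e^{C_R T}$, which is the desired uniform bound. Alternatively --- and this is essentially what the one-species result cited by the paper rests on --- one can avoid the time change altogether and use a parabolic barrier directly on the pressure equation, $\bar p(x,t)=A\bigl(R_0^2e^{4At}-x^2\bigr)_+$ with $2A\ge\max(F(0),G(0))$ and $A(R_0^2-L_0^2)\ge P_H$: on its support $\bar p_{xx}=-2A$, so $\gamma\bar p\bigl(\bar p_{xx}+R\bigr)\le 0$ and the dangerous $O(\gamma)$ terms enter with a favourable sign, making every constant trivially independent of $\gamma$. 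A final small correction: your closing sentence cannot hold as stated, since $n_{\gamma,\e}(0)\ge\e>0$ everywhere, so the regularised densities are never compactly supported; the proposition concerns $\e=0$, and the paper uses it to fix the box $\Omega=(-L,L)$ on which the regularised Neumann problem is subsequently posed.
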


\begin{proof}
We note that we may write the pressure equation as
\begin{align*}
\partialt \p = \left|\px\right|^2 + \gamma \p \pxx + \gamma \n^{\gamma-1}\left(\none F(\p) + \ntwo G(\p)\right),
\end{align*}
and at this stage we do not  need to use the fractions $\cone$, $\ctwo$. 
Since $\none, \ntwo \geq 0$, we have
$\none F(\p) + \ntwo G(\p) \leq (\none + \ntwo) \max(F(\p),G(\p)) = \n \max(F(\p),G(\p))$.
We infer
\begin{align*}
\partialt \p & \leq \left|\px\right|^2 + \gamma \p \pxx + \gamma \n^{\gamma-1}\n \max(F(\p),G(\p))\\
 & = \left|\px\right|^2 + \gamma \p \pxx + \gamma \p \max(F(\p),G(\p)).
\end{align*}
Therefore $\p$ is a subsolution of the equation satisfied with reaction function $\max(F,G)$. For this equation, it is well known that compactly supported initial data leads to a compactly supported solution for all times~\cite{Perthame2014}. 
\end{proof}
We may without loss of generality assume that $\Omega = (-L,L)$ for some $L>0$, and we define the set $Q_T: = \Omega \times (0,T)$.  We fix these arbitrary parameters $T$ and $L$. 

\subsection{Regularisation and strong solutions}

As explained above, the regularisation step is purely technical, yet necessary, for the rigorous derivation of the \textit{a priori} estimates in the subsequent section.\\

\textbf{Regularised equations.}
\label{Regularisation}
We denote by $n_{\gamma,\e}^{(1)}, \, n_{\gamma,\e}^{(2)}$ the solutions of the system when the initial data are the regularised ones: $n_{\gamma,\e, \rm{init}}^{(i)} = n_{\gamma,\rm{init}}^{(i)} + \e$, $i=1,2$ for $\e > 0$. From now on, we  consider the system
\begin{align}
\label{eq:regularised}
\begin{cases}
		\partialt {n_{\gamma,\e}^{(1)}}  \remspace &= \fpartial x \left(n_{\gamma,\e}^{(1)}\, \partialx {p_{\gamma,\e}} \right) + n_{\gamma,\e}^{(1)} F_1(p_{\gamma, \e}) + n_{\gamma,\e}^{(2)} G_1(p_{\gamma, \e}),\\[\eqnspace]
    	\partialt {n_{\gamma,\e}^{(2)}}  \remspace &= \fpartial x \left(n_{\gamma,\e}^{(2)}\, \partialx {p_{\gamma,\e}}\right) + n_{\gamma,\e}^{(1)} F_2(p_{\gamma, \e}) + n_{\gamma,\e}^{(2)} G_2(p_{\gamma, \e}),
\end{cases}
\end{align}
with, as before, $p_{\gamma, \e} = n_{\gamma,\e}^\gamma$ with $n_{\gamma,\e} = n_{\gamma,\e}^{(1)} + n_{\gamma,\e}^{(2)}$. 

The regularised total density $n_{\gamma,\e}$ satisfies the equation
\begin{align}
\label{eq:regularised_n}
\partialt {n_{\gamma,\e}} = \fpartial x \left( n_{\gamma,\e} \, \partialx {p_{\gamma,\e} } \right) + n_{\gamma,\e}\, c_{\gamma,\e}^{(1)} \, F(p_{\gamma,\e}) + n_{\gamma,\e} \, c_{\gamma,\e}^{(2)} \, G(p_{\gamma,\e}),
\end{align}
which we endow with homogeneous Neumann boundary conditions
\begin{align*}
\dfrac{\partial n_{\gamma, \e}}{\partial x}(t,x) = 0, \; x = \pm L.
\end{align*}
Thanks to~\cite{Gwiazda2018}[Theorem 2], as $\e \rightarrow 0$, there is convergence of solutions of the regularised system towards those of the original one. More precisely, $n_{\gamma,\e}^{(i)}$ converges to $n_{\gamma}^{(i)}$ in $L^\infty(Q_T)-w\star$ for $i=1,2$, $n_{\gamma,\e}$, $p_{\gamma,\e}$ converge to $\n$, $\p$, in $L^q(Q_T), 1\leq q < \infty$. Moreover, $\partial_x  p_{\gamma,\e}$ strongly converges to $\partial_x \p$ in $L^2(Q_T)$.

As we shall now prove, the regularised total density is positive. This allows us to define the quotients $c^{(1)}_{\gamma, \e}$ and $c^{(2)}_{\gamma, \e}$. On the other hand, the positivity ensures that $n_{\gamma,\e}$ is a strong solution of \eqref{eq:regularised_n}.
In fact, the regularisation allows us to get rid of the degenerate parabolicity of the equation, and then solutions are classical~\cite{Vazquez2007}[Theorem 3.1].

Finally, the associated pressure satisfies, in the strong sense,
\begin{align}
\label{eq:p}
	\partialt {p_{\gamma, \e}} =\left|\partialx {p_{\gamma, \e}}\right|^2 + \gamma p_{\gamma, \e}  \left[ \frac{\partial^2 p_{\gamma, \e}}{\partial x^2}+ \, R_{\gamma, \e} \right] .
\end{align}
\\
From now on, we keep the regularisation parameter $\e >0$ in the statement of all propositions and theorems below while dropping it in the proofs to allow for an improved readability. 
\\

\textbf{Positivity for the density.} We now build a subsolution for the equation on $n_{\gamma, \e}$. A difficulty is that we cannot hope to derive any general comparison result at the level of the system. For that reason, the control from below relies on the observation that, thanks to the definition of $\p$, \eqref{eq:regularised_n} can be rewritten in a porous medium equation form:
\begin{equation}
\label{eq:pme}
\frac{\partial n_{\gamma,\e}}{\partial t} - \frac{\gamma}{\gamma+1}\frac{\partial^2}{\partial x^2}\left(n_{\gamma,\e}^{\gamma+1}\right) = n_{\gamma,\e} R_{\gamma, \e}.
\end{equation}

\begin{proposition}[Positivity for the density $n_{\gamma,\e}$]
\label{prop:positivity}
The solution to~\eqref{eq:regularised_n} satisfies
\begin{equation*}
n_{\gamma,\e} \geq  \underline{n}_{\gamma} := 2 \e e^{-R_{\infty}t} >0,
\end{equation*}
for $t \in (0, T]$, where $R_{\infty}>0$ is a $L^\infty$ bound for $| R_{\gamma, \e}|$. 
\end{proposition}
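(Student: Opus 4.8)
The plan is to construct a spatially homogeneous subsolution and close the argument with an $L^1$ comparison tailored to the porous-medium form~\eqref{eq:pme}. First I would check that the candidate lower bound $\underline{n}_\gamma(t) = 2\e\, e^{-R_\infty t}$ lies below the data: since $n_{\gamma,\e, \rm{init}}^{(i)} = n_{\gamma,\rm{init}}^{(i)} + \e$ and the densities are nonnegative by Proposition~\ref{prop:nonnegativity}, we have $n_{\gamma,\e}(0,x) \geq 2\e = \underline{n}_\gamma(0)$. Because $\underline{n}_\gamma$ is constant in $x$, its diffusion contribution vanishes, $\frac{\partial^2}{\partial x^2}\big(\underline{n}_\gamma^{\gamma+1}\big) = 0$, so that
\[
\ddt \underline{n}_\gamma = -R_\infty\, \underline{n}_\gamma \leq \underline{n}_\gamma\, R_{\gamma,\e},
\]
using $R_{\gamma,\e} \geq -R_\infty$ and $\underline{n}_\gamma > 0$. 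Hence $\underline{n}_\gamma$ is a subsolution of~\eqref{eq:pme} carrying the \emph{same} reaction coefficient $R_{\gamma,\e}$ as the genuine solution.

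Next I would compare the two. Setting $w := \underline{n}_\gamma - n_{\gamma,\e}$ and subtracting the subsolution inequality from~\eqref{eq:pme} gives
\[
\partial_t w - \frac{\gamma}{\gamma+1}\,\frac{\partial^2}{\partial x^2}\!\left(\underline{n}_\gamma^{\gamma+1} - n_{\gamma,\e}^{\gamma+1}\right) \leq w\, R_{\gamma,\e}.
\]
Multiplying by $\sign_+(w)$ and exploiting that $s\mapsto s^{\gamma+1}$ is increasing, so that $\sign_+(w) = \sign_+\!\big(\underline{n}_\gamma^{\gamma+1} - n_{\gamma,\e}^{\gamma+1}\big)$, the Kato--Kruzhkov-type inequality of Lemma~\ref{lem:krushkov} lets me bound the diffusion term by a genuine spatial derivative of $\pos{\underline{n}_\gamma^{\gamma+1} - n_{\gamma,\e}^{\gamma+1}}$. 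On the right-hand side the reaction term is controlled by $R_\infty \pos{w}$, since $\abs{R_{\gamma,\e}} \leq R_\infty$.

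Integrating over $\Omega = (-L,L)$, the homogeneous Neumann conditions $\partial_x n_{\gamma,\e} = 0$ at $x=\pm L$ (together with the constancy of $\underline{n}_\gamma$) make the boundary contributions of the diffusion term vanish, leaving
\[
\ddt \int_\Omega \pos{w}\,\d x \leq R_\infty \int_\Omega \pos{w}\,\d x .
\]
Since $\pos{w(0)} = 0$, Gronwall's lemma forces $\int_\Omega \pos{w}\,\d x \equiv 0$, whence $w \leq 0$, i.e. $n_{\gamma,\e} \geq \underline{n}_\gamma = 2\e\, e^{-R_\infty t} > 0$ on $(0,T]$, as claimed.

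The only genuinely delicate point I anticipate is the treatment of the nonlinear diffusion term, namely the legitimacy of pairing $\frac{\partial^2}{\partial x^2}\big(\underline{n}_\gamma^{\gamma+1} - n_{\gamma,\e}^{\gamma+1}\big)$ with $\sign_+(w)$. This is exactly where the monotonicity of $s\mapsto s^{\gamma+1}$ and the distributional Kato inequality of Lemma~\ref{lem:krushkov} are indispensable, and where the classical (hence sufficiently regular) character of $n_{\gamma,\e}$ — guaranteed by the $\e$-regularisation removing the degeneracy of the equation — is what makes the manipulation rigorous.
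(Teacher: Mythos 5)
Your proposal is correct and follows essentially the same route as the paper's own proof: the same spatially homogeneous subsolution $\underline{n}_\gamma = 2\e\, e^{-R_\infty t}$, the same comparison via multiplication by $\sign_+$ using the monotonicity of $s \mapsto s^{\gamma+1}$ together with the Kato--Kruzhkov lemma, and the same integration-plus-Gronwall conclusion. The only cosmetic difference is bookkeeping: you absorb $-R_\infty \underline{n}_\gamma \leq \underline{n}_\gamma R_{\gamma,\e}$ before subtracting the equations, whereas the paper performs the equivalent splitting $-R_\infty\underline{n}_{\gamma}- n_{\gamma,\e} R = \underline{n}_{\gamma}(-R_\infty - R) + R(\underline{n}_{\gamma}-n_{\gamma,\e})$ after multiplying by $\sign_+$.
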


\begin{proof}
We have chosen $\underline{n}_{\gamma}$ so that $\frac{\partial \underline{n}_{\gamma}}{\partial t} = -R_{\infty}\, \underline{n}_{\gamma}$ and $\underline{n}_{\gamma}(0) = 2 \e \leq \n(0)$.

Subtracting the equations for $\underline{n}_{\gamma}$ and $\n$ we get
	\begin{equation*}
	\fpartial t \left(\underline{n}_{\gamma} - \n \right) - \frac{\gamma}{\gamma+1}\frac{\partial^2}{\partial x^2}\left(\underline{n}_{\gamma}^{\gamma+1}- \n^{\gamma+1} \right) = - R_{\infty}\underline{n}_{\gamma}- \n R.
	\end{equation*}
	Multiplying by $\sign_+(\underline{n}_{\gamma} - \n )$ and using that $\sign_+( \underline{n}_{\gamma}^{\gamma+1} - n_{\gamma}^{\gamma+1} ) = \sign_+(\underline{n}_{\gamma} - \n)$, we arrive at
	\begin{equation*}
	\fpartial t \left| \underline{n}_{\gamma} - \n \right|_+ - \frac{\gamma}{\gamma+1}\frac{\partial^2}{\partial x^2}\left|\underline{n}_{\gamma}^{\gamma+1} - \n^{\gamma+1}\right|_+ \leq \left( - R_{\infty} \underline{n}_{\gamma}- \n R \right)\sign_+(\underline{n}_{\gamma} - \n ).
	\end{equation*}
	We now observe that we can write
	\begin{equation*}
	\begin{split}
	- R_{\infty}\underline{n}_{\gamma}- \n R = \underline{n}_{\gamma} \left(-R_{\infty} - R \right) + R \left(\underline{n}_{\gamma}-\n\right).
	\end{split}
	\end{equation*}
	Using that the quantity $-R_{\infty} - R$ is always negative thanks to the definition of $R_{\infty}$, we obtain
	\begin{equation*}
	\fpartial t \left| \underline{n}_{\gamma} - \n \right|_+ - \frac{\gamma}{\gamma+1}\frac{\partial^2}{\partial x^2}\left|\underline{n}_{\gamma}^{\gamma+1} - \n^{\gamma+1}\right|_+ \leq R\left|\underline{n}_{\gamma}-\n\right|_+.
	\end{equation*}
	Integrating in space yieds
	\begin{equation*}
	\ddt \int_{\Omega} \left| \underline{n}_{\gamma} - \n \right|_+ \d x \leq R_\infty \int_{\Omega} \left|\underline{n}_{\gamma}-\n\right|_+ \d x,
	\end{equation*}
	which, thanks to Gronwall's lemma and the hypothesis on initial conditions, implies that a.e. $\n(t,x) \geq \underline{n}_{\gamma}(t,x)$ for $t \in (0,T]$.
\end{proof}

\begin{remark}
More generally, the previous result shows that for every nonnegative $\underline{n}_{\gamma}$ and $\bar{n}_{\gamma}$ that satisfy respectively
\begin{equation*}
\begin{split}
\frac{\partial \underline{n}_{\gamma}}{\partial t} - \frac{\gamma}{\gamma+1}\frac{\partial^2 \underline{n}_{\gamma}^{\gamma+1}}{\partial x^2} &\leq -R_{\infty}\, \underline{n}_{\gamma},
\\
\frac{\partial \bar{n}_{\gamma}}{\partial t}- \frac{\gamma}{\gamma+1}\frac{\partial^2 \bar{n}_{\gamma}^{\gamma+1}}{\partial x^2} &\geq R_{\infty}\,\bar{n}_{\gamma},
\end{split}
\end{equation*}
with $R_{\infty}>0$ as in Proposition~\ref{prop:positivity}, we have a.e. in $Q_T$
\begin{equation*}
\underline{n}_{\gamma}(t,x) \leq n_{\gamma}(t,x) \leq \bar{n}_{\gamma}(t,x).
\end{equation*}
\end{remark}
\begin{proposition}[Uniform bounds for $n_{\gamma,\e}$ and $p_{\gamma,\e}$]
The solution of \eqref{eq:regularised_n}, satisfies for a.e. $(t,x) \in Q_T$
	\begin{align*}
	0 < n_{\gamma, \e}(t,x) \leq P_H^{1/\gamma} \quad \text{and}\quad 0 < p_{\gamma, \e}(t,x) \leq P_H .
	\end{align*}
\end{proposition}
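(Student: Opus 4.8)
The lower bounds require no new work: Proposition~\ref{prop:positivity} already gives $n_{\gamma,\e} \geq \underline{n}_\gamma > 0$, and $p_{\gamma,\e} = n_{\gamma,\e}^\gamma$ then forces $p_{\gamma,\e} > 0$. For the upper bounds, note that $n = p^{1/\gamma}$ makes the two inequalities $n_{\gamma,\e} \leq P_H^{1/\gamma}$ and $p_{\gamma,\e} \leq P_H$ equivalent, so the proposition reduces to proving the single barrier estimate $\p \leq P_H$ (suppressing $\e$, as agreed). The plan is to run a Stampacchia-type argument directly on the strong form~\eqref{eq:p} of the pressure equation, with $P_H$ playing the role of a supersolution.

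The structural fact that turns $P_H$ into a barrier is a sign condition on the reaction $R = \cone F(\p) + \ctwo G(\p)$. By Proposition~\ref{prop:nonnegativity} the fractions are nonnegative and sum to one, so $R$ is a convex combination of $F(\p)$ and $G(\p)$; and on $\{\p \geq P_H\}$ assumption~(iii) of Definition~\ref{def:feasible_growth} gives $F(\p) = F_1(\p) \leq 0$ and $G(\p) = G_2(\p) \leq 0$ (the cross terms $F_2, G_1$ vanishing there). Hence $R \leq 0$ wherever $\p \geq P_H$, which is the only place the reaction will be seen.

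I would then estimate $\ddt \int_\Omega \pos{\p - P_H}\,\d x$ by testing~\eqref{eq:p} against $\sign_+(\p - P_H)$. The reaction contribution $\int_\Omega \sign_+(\p - P_H)\,\gamma\p\,R\,\d x$ is $\leq 0$ by the previous step. The second-order term is integrated by parts: the boundary terms vanish since the Neumann condition forces $\px = \gamma n^{\gamma-1}\partial_x n = 0$ at $x = \pm L$, the derivative of $\sign_+(\p - P_H)$ contributes a nonnegative measure with the right sign, and what survives is $-\gamma\int_\Omega \sign_+(\p - P_H)\,\abs{\px}^2\,\d x$. Collecting everything,
\begin{align*}
\ddt \int_\Omega \pos{\p - P_H}\,\d x \leq (1-\gamma)\int_\Omega \sign_+(\p - P_H)\,\abs{\px}^2\,\d x \leq 0,
\end{align*}
the last inequality being exactly where $\gamma > 1$ is used. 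Since $p_{\gamma,\e}(0) \leq p_{\gamma,\e_0}(0) \leq P_H$ by~\eqref{eq:init} (the data being increasing in $\e$), the functional starts at $0$ and stays there, giving $\p \leq P_H$ for a.e. $(t,x) \in Q_T$.

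The one genuinely delicate point is the first-order term $\abs{\px}^2$, which, unlike in the single-species problem, does not vanish on its own: the argument survives only because integrating the diffusion term by parts returns precisely $-\gamma\int \sign_+(\p-P_H)\abs{\px}^2$, which beats the $+\int \sign_+(\p-P_H)\abs{\px}^2$ coming from the Hamilton--Jacobi term as soon as $\gamma > 1$. Making the manipulation of $\sign_+(\p - P_H)$ against the degenerate weight $\gamma\p$ rigorous is where I would lean on the Kruzhkov-type Lemma~\ref{lem:krushkov} and Remark~\ref{rem:krushkov}, or a smooth regularisation $\sign_+^\delta$ followed by $\delta\to 0$.
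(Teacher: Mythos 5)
Your proof is correct, but it takes a genuinely different route from the paper's. You run the barrier argument at the level of the pressure equation \eqref{eq:p}, testing against $\sign_+(\p - P_H)$, and therefore you must confront the Hamilton--Jacobi term $\left|\px\right|^2$; your key observation is that integrating the degenerate diffusion term by parts returns $-\gamma\int_\Omega \sign_+(\p-P_H)\left|\px\right|^2\d x$ (plus a further non-positive contribution from the derivative of $\sign_+$), which absorbs the Hamilton--Jacobi term precisely because $\gamma>1$, giving the monotone decay $\ddt\int_\Omega \pos{\p - P_H}\d x \leq 0$ with no Gronwall argument needed. The paper instead stays at the level of the density written in porous-medium form \eqref{eq:pme}, where no gradient-squared term appears at all: it compares $\n$ with the constant $n_H = P_H^{1/\gamma}$, splits $\n R = R(\n - n_H) + n_H R$, uses the observation $n_H\left(\zeta F(P_H) + \xi G(P_H)\right)\leq 0$ together with the monotonicity of $F$ and $G$ (assumption (ii) of Definition~\ref{def:feasible_growth}) to control the signs of $F(\p)-F(P_H)$ and $G(\p)-G(P_H)$ on $\{\n \geq n_H\}$, and closes with the Kruzhkov sign trick and Gronwall's lemma. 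What each approach buys: yours needs only assumption (iii) (the sign of the reactions beyond $P_H$, making $R$ a non-positive convex combination there) and not the monotonicity (ii), and it yields a cleaner contraction-type inequality; the paper's avoids the delicate interplay between $\left|\px\right|^2$ and the degenerate weight $\gamma\p$ entirely, and reuses verbatim the comparison template already set up for the positivity result (Proposition~\ref{prop:positivity}), which keeps the section uniform. Both treatments of the lower bound are identical, both lean on Lemma~\ref{lem:krushkov} and Remark~\ref{rem:krushkov} (or, equivalently, a smooth approximation of $\sign_+$) for rigour, and both use \eqref{eq:init} together with the monotonicity of the regularised initial data in $\e$ to start the comparison at time zero.
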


\begin{proof}
From Proposition~\ref{prop:positivity}, it is clear that $\p = (\n)^{\gamma} > 0$.\\
As for the $L^{\infty}$ bounds, we set $n_H = (P_H)^{1/\gamma}$ and observe that, for every $0 \leq \zeta, \xi \leq 1$, 
\begin{equation*}
0 \geq n_H \left( \zeta F(P_H) + \xi G(P_H) \right),
\end{equation*}
thanks to the definition of $P_H$. Thus, we can choose $\zeta = \cone$ and $\xi = \ctwo$ and say that
\begin{equation*}
\begin{split}
\fpartial t \left( \n - n_H \right) &- \frac{\gamma}{\gamma+1} \frac{\partial^2}{\partial x^2} \left( \n^{\gamma +1} - n_H^{\gamma +1} \right)\\
& = R\left( \n - n_H \right)  + n_H \left[\cone \left( F(p) - F(P_H) \right) + \ctwo \left( G(p) - G(P_H) \right) \right].
\end{split}
\end{equation*}
Multiplying by $\sign_+(\n - n_H)$ and observing that \[\left( F(\p) - F(P_H) \right) \sign_+(\n - n_H), \, \left( G(\p) - G(P_H) \right) \sign_+(\n - n_H) \leq 0\] thanks to Definition \ref{def:feasible_growth}, we get
\begin{equation*}
\begin{split}
\fpartial t \left| \n - n_H \right|_+ - \frac{\gamma}{\gamma+1} \frac{\partial^2}{\partial x^2} \left| \n^{\gamma +1} - n_H^{\gamma +1} \right|_+ \leq R\left| \n - n_H \right|_+.
\end{split}
\end{equation*}
Integrating in space and using Gronwall's lemma, we obtain
\begin{equation*}
\ddt \int_{\Omega} \left| \n - n_H \right|_+ \d x \leq C \int_{\Omega} \left| \n - n_H \right|_+ \d x,
\end{equation*}
which implies $\n(t,x) \leq n_H$ and $\p(t,x) \leq P_H$ for every $t \in (0,T]$ since it holds initially by \eqref{eq:init}.
\end{proof}

\subsection{Equations for the fractions}

Let us examine the equations satisfied by the concentrations $0 \leq c_{\gamma, \e}^{(i)} = n_{\gamma, \e}^{(i)} / n_{\gamma,\e} \leq 1$ for $i=1,2$. Thanks to the positivity of $n_{\gamma,\e}$ these quantities are well defined. Deriving the equations they satisfy then also requires the smoothness of $n_{\gamma,\e}$.

By \eqref{System} and \eqref{eq:regularised_n}, they formally satisfy
\begin{equation*}
\begin{split}
	\partialt{c^{(1)}_{\gamma,\e}} =&
	\frac{1}{n_{\gamma,\e}} \left( \partialt{n^{(1)}_{\gamma,\e}} - \frac{n^{(1)}_{\gamma,\e}}{n_{\gamma,\e}} \partialt{n_{\gamma,\e}} \right)\\
	=& \frac{1}{n_{\gamma,\e}} \fpartial x \left(n^{(1)}_{\gamma,\e} \partialx{p_{\gamma,\e}}\right) + c^{(1)}_{\gamma,\e} F_1(p_{\gamma,\e}) + c^{(2)}_{\gamma,\e} G_1(p_{\gamma,\e})\\
	& \quad - \frac{c^{(1)}_{\gamma,\e}}{n_{\gamma,\e}} \fpartial x \left(n_{\gamma,\e} \partialx{p_{\gamma,\e}}\right) - \left(c^{(1)}_{\gamma,\e}\right)^{2} F(p_{\gamma,\e}) - c^{(1)}_{\gamma,\e} \, c^{(2)}_{\gamma,\e} \, G(p_{\gamma,\e}).
\end{split}
\end{equation*}
Thus, observing that
\begin{align*}
    \displaystyle \frac{1}{n_{\gamma,\e}} \fpartial x \left(n^{(1)}_{\gamma,\e} \partialx{p_{\gamma,\e}}\right) - \frac{c^{(1)}_{\gamma,\e}}{n_{\gamma,\e}} \fpartial x \left(n_{\gamma,\e} \partialx{p_{\gamma,\e}}\right) = \partialx {c^{(1)}_{\gamma,\e}} \, \partialx {p_{\gamma,\e}},
\end{align*}
we derive the two equations for $c^{(1)}_{\gamma,\e}$ and $c^{(2)}_{\gamma,\e}$:
\begin{align}
	\label{eq:system_ratios}
	\partialt {c^{(1)}_{\gamma,\e}} = \partialx {c^{(1)}_{\gamma,\e}} \partialx{p_{\gamma,\e}} + c^{(1)}_{\gamma,\e} F_1(p_{\gamma,\e}) + c^{(2)}_{\gamma,\e} G_1(p_{\gamma,\e}) - \left(c^{(1)}_{\gamma,\e}\right)^2 F(p_{\gamma,\e}) - c^{(1)}_{\gamma,\e} c^{(2)}_{\gamma,\e} G(p_{\gamma,\e}),\\[\eqnspace]
    \partialt {c^{(2)}_{\gamma,\e}} = \partialx {c^{(2)}_{\gamma,\e}} \partialx{p_{\gamma,\e}} + c^{(1)}_{\gamma,\e} F_2(p_{\gamma,\e}) + c^{(2)}_{\gamma,\e} G_2(p_{\gamma,\e}) - \left(c^{(2)}_{\gamma,\e}\right)^2 G(p_{\gamma,\e}) - c^{(1)}_{\gamma,\e} c^{(2)}_{\gamma,\e} F(p_{\gamma,\e}).
\end{align}

Note that these equations are to be understood in the weak sense. For $c^{(1)}_{\gamma,\e}$, for example, it is given by
\begin{align*}
\label{eq:weak_fractions}
\int_0^T \int_\Omega \bigg[-c^{(1)}_{\gamma,\e} \frac{\partial \phi}{\partial t} + c^{(1)}_{\gamma,\e} \frac{\partial p_{\gamma,\e}}{\partial x} \frac{\partial \phi}{\partial x} +  c^{(1)}_{\gamma,\e} \frac{\partial^2 p_{\gamma,\e}}{\partial x^2} \phi&   \\ - \left(c^{(1)}_{\gamma,\e} F_1(p_{\gamma,\e}) + c^{(2)}_{\gamma,\e} G_1(p_{\gamma,\e}) - \left(c^{(1)}_{\gamma,\e}\right)^2 F(p_{\gamma,\e}) - c^{(1)}_{\gamma,\e} c^{(2)}_{\gamma,\e} G(p_{\gamma,\e})\right)\phi \bigg] \, \d x \d t & = \int_{\Omega} c_{\gamma, \e}^{(1)}(0)\, \phi(0) \, \d x.
\end{align*}

They are obtained by choosing $\phi / n_{\gamma,\e}$ as a test function in the weak formulation~\eqref{eq:weak} of the equations for $n^{(1)}_{\gamma,\e}$ and $n^{(2)}_{\gamma,\e}$, with $\phi$ smooth and compactly supported in $\Omega$. This choice of test function is made possible by the smoothness of $n_{\gamma,\e}$. 

\section{A Priori Estimates}
\label{Section3}

Throughout, $C$ will denote a constant independent of $\gamma$ and $\e$ (but which might depend on $T$), which may also change from line to line.
This section is dedicated to proving the following \textit{a priori} estimates. The last estimate involves 
\begin{equation}
w_{\gamma, \e}:= \frac{\partial^2 p_{\gamma, \e} }{\partial x^2} + R_{\gamma, \e}.
\end{equation}
\begin{theorem}[A Priori Estimates]
	\label{thm_apriori}
	\begin{align}
	\label{eq:estimate_ltwo}
	\int_0^T \int_\Omega \left|\frac{\partial p_{\gamma, \e}}{\partial x}\right|^2\d x \d t \leq C,
	\end{align}
\begin{align}
\label{eq:estimate_bv}
	\sup_{0 \leq t \leq T} \int_\Omega \left( \left|\partialx {c_{\gamma, \e}^{(i)}}\right|  + \left|\partialx {n_{\gamma, \e}^{(i)}}  \right| \right) \d x \leq C, \; \; i=1,2,
\end{align}

\begin{align}
\label{eq:estimate_w}
	\gamma\, \int_0^T \int_\Omega p_{\gamma, \e} |w_{\gamma, \e} |\d x \d t\leq C, \qquad 
\sup_{0 \leq t \leq T} \int_\Omega \neg{w_{\gamma, \e}} \d x\leq C .
\end{align}
\end{theorem}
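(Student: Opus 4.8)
The plan is to establish the three estimates in order, since each feeds into the next.

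\emph{The $L^2$ estimate \eqref{eq:estimate_ltwo}.} First I would integrate the pressure equation \eqref{eq:p} in space. Since $\p$ is compactly supported in $\Omega$, integration by parts gives $\int_\Omega \p\,\partial_x^2\p\,\d x = -\int_\Omega\left|\partial_x\p\right|^2\d x$, so that
\begin{align*}
\ddt\int_\Omega \p\,\d x = (1-\gamma)\int_\Omega\left|\partial_x\p\right|^2\d x + \gamma\int_\Omega \p\, R\,\d x.
\end{align*}
Rearranging and integrating in time, the term $\int_\Omega\p(T)\geq 0$ is dropped, $\int_\Omega\p(0)$ is bounded by the initial data, and $\gamma\int_0^T\int_\Omega \p R$ is controlled by $\gamma R_\infty P_H|Q_T|$ since $\p\leq P_H$ and $|R|\leq R_\infty$. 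After dividing by $\gamma-1$ the prefactor $\gamma/(\gamma-1)$ stays bounded, which yields \eqref{eq:estimate_ltwo}.

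\emph{The BV estimates \eqref{eq:estimate_bv}.} For the fractions I would differentiate \eqref{eq:system_ratios} in $x$, multiply by $\sign(\partial_x\cone)$ and integrate. The transport part $\partial_x\cone\,\partial_x\p$ produces, after using $\sign(\partial_x\cone)\,\partial_x(\cdot)=\partial_x\abs{\cdot}$ and one integration by parts, two contributions $\pm\int_\Omega\abs{\partial_x\cone}\,\partial_x^2\p$ that cancel exactly; what remains is $\int_\Omega\sign(\partial_x\cone)\,\partial_x\mathcal{R}$, where $\mathcal{R}$ is the smooth bounded reaction, so that $\partial_x\mathcal{R}$ is a bounded combination of $\partial_x\cone$ and $\partial_x\p$. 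Gronwall's lemma together with \eqref{eq:estimate_ltwo} (via Cauchy--Schwarz in time, to bound $\int_0^T\int_\Omega\abs{\partial_x\p}$) and the initial bound \eqref{eq:BVcond_init} then gives the uniform control of $\int_\Omega\abs{\partial_x\ci}$. The density bound follows the same pattern on the porous-medium form \eqref{eq:pme}: differentiating, multiplying by $\sign(\partial_x\n)$ and invoking Lemma~\ref{lem:krushkov} to discard the sign-definite second-order contribution, one is left with $\int_\Omega\abs{\partial_x(\n R)}$, which is handled by the fraction bound just obtained and \eqref{eq:estimate_ltwo}. Finally $\partial_x\ni=\n\,\partial_x\ci+\ci\,\partial_x\n$ and the $L^\infty$ bounds close \eqref{eq:estimate_bv}.

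\emph{The estimates \eqref{eq:estimate_w}.} The core is the evolution equation for $\w=\partial_x^2\p+R$. Differentiating \eqref{eq:p} twice in space, substituting $\partial_x^2\p=\w-R$, and using the $\ci$-equations to cancel the $\abs{\partial_x\p}^2$ terms generated by $\partial_t R$, one obtains
\begin{align*}
\partialt{\w} ={}& \gamma\p\,\partial_x^2\w + 2(1+\gamma)\,\partial_x\p\,\partial_x\w + (2+\gamma)\w^2 - (4+\gamma)\w R + 2R^2 \\
&{}- \partial_x\p\,\partial_x R + \mathcal{B} + \gamma\,\p\,\w\left(\cone F'(\p)+\ctwo G'(\p)\right),
\end{align*}
where $\mathcal{B}$ gathers bounded reaction terms. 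Testing against $\sign_-(\w)$ and integrating (boundary terms vanish by the Neumann condition and compact support), the diffusion yields a nonpositive Dirac contribution plus the boundary-free term $\gamma\int_\Omega(\w-R)\neg{\w}\,\d x$; combined with the transport and the purely quadratic terms, all $O(\gamma)$ multiples of $\int_\Omega\neg{\w}^2$ cancel identically, while the $O(\gamma)$ multiples of $\int_\Omega R\,\neg{\w}$ collapse to the bounded $-2\int_\Omega R\,\neg{\w}$. The decisive point is the last term: since $F',G'<0$ by Definition~\ref{def:feasible_growth}, the coefficient $S:=\cone F'(\p)+\ctwo G'(\p)\leq-\delta_0<0$ on $[0,P_H]$, so $\gamma\int_\Omega S\,\p\,\neg{\w}\leq-\gamma\delta_0\int_\Omega\p\,\neg{\w}$ is a strong damping. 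Keeping it, integrating in time and applying Gronwall gives simultaneously $\sup_t\int_\Omega\neg{\w}\leq C$ (using the initial bound \eqref{eq:technical_w}, which controls $\int_\Omega\neg{\w}(0)$) and $\gamma\int_0^T\int_\Omega\p\,\neg{\w}\leq C$. The latter combines with $\gamma\int_0^T\int_\Omega\p\,\w=\int_\Omega(\p(T)-\p(0))-\int_0^T\int_\Omega\abs{\partial_x\p}^2$, which is bounded by \eqref{eq:estimate_ltwo}, and with $\abs{\w}=\w+2\neg{\w}$, to produce the full weighted bound $\gamma\int_0^T\int_\Omega\p\,\abs{\w}\leq C$.

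\emph{Main obstacle.} The delicate point is the control, uniformly in $\gamma$ and $\e$, of the genuinely lower-order products $\int_\Omega\abs{\partial_x\p}\,\abs{\partial_x\ci}$ arising from $-\partial_x\p\,\partial_x R$: with only $\partial_x\p\in L^2(Q_T)$ and $\partial_x\ci\in L^\infty_tL^1_x$ at hand, these are not directly integrable, and this is precisely where the one-dimensional BV manipulations are required to absorb them (together with the sign-definite and strongly damped terms above). This is the crux of the argument.
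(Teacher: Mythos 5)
Your treatment of \eqref{eq:estimate_ltwo} and \eqref{eq:estimate_bv} is correct and essentially identical to the paper's proof (the paper routes the sign manipulations through the Krushkov-type Lemma~\ref{lem:krushkov}, but the algebra — exact-derivative transport terms vanishing, Gronwall fed by the $L^2$ bound and \eqref{eq:BVcond_init} — is the same). Your non-divergence form of the equation for $\w$ is also the correct expansion of the paper's divergence-form identity: the cancellation of all $\gamma$-multiples of $\int_\Omega \neg{\w}^2\,\d x$ and the collapse of the $\gamma$-multiples of $\int_\Omega R\,\neg{\w}\,\d x$ to $-2\int_\Omega R\,\neg{\w}\,\d x$ that you describe is exactly what one finds upon integrating the paper's equation, and your identification of the damping term $\gamma\,\p\,\neg{\w}\,(\cone F'(\p)+\ctwo G'(\p))\le -\gamma\delta_0\,\p\,\neg{\w}$ matches the paper.

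The gap is precisely at the point you label ``the crux'' and then leave unresolved. The differential inequality you reach has the form
\[
\ddt\int_\Omega \neg{\w}\,\d x \;\le\; -\gamma\delta_0\int_\Omega \p\,\neg{\w}\,\d x \;+\; C\int_\Omega \neg{\w}\,\d x \;+\; C \;+\; C\int_\Omega \abs{\px}\left(\abs{\conex}+\abs{\ctwox}\right)\d x,
\]
and Gronwall cannot be applied as you claim, because the last term is not yet expressed through $\int_\Omega\neg{\w}\,\d x$. Nor can it be absorbed into the damping term, as your parenthetical suggests: that term carries the degenerate weight $\p$ (which vanishes near and outside the support), whereas the product term carries no such weight, so no absorption is possible. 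The missing idea — which the paper supplies, and which it explicitly flags in its conclusion as the reason the whole argument is confined to one space dimension — is a Lipschitz bound on $\p$ obtained from the one-dimensional Sobolev embedding. Concretely: (i) by \eqref{eq:estimate_bv}, $\int_\Omega \abs{\px}(\abs{\conex}+\abs{\ctwox})\,\d x \le C\norm{\px}_{L^\infty(\Omega)}$; (ii) since $\px$ vanishes on $\partial\Omega$, $\norm{\px}_{L^\infty(\Omega)} \le \norm{\pxx}_{L^1(\Omega)}$; (iii) writing $\abs{\pxx}\le \abs{\w}+\abs{R} = \w + 2\neg{\w} + \abs{R}$ and using $\int_\Omega \pxx\,\d x = 0$ (Neumann conditions), one gets
\[
\norm{\px}_{L^\infty(\Omega)} \;\le\; C + 2\int_\Omega \neg{\w}\,\d x,
\]
which is the paper's estimate \eqref{eq:2410_2331}. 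Only after substituting this does the right-hand side become $C\int_\Omega\neg{\w}\,\d x + C$ (keeping the nonpositive damping term), so that Gronwall together with \eqref{eq:technical_w} yields $\sup_t\int_\Omega\neg{\w}\,\d x\le C$, and integrating the retained damping term in time yields $\gamma\int_0^T\int_\Omega \p\,\neg{\w}\,\d x\,\d t\le C$; your final step combining this with $\gamma\int_0^T\int_\Omega\p\,\w\,\d x\,\d t\le C$ from Eq.~\eqref{eq:p} is then correct. Without steps (ii)--(iii), your proof of \eqref{eq:estimate_w} does not close.
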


The proof of the theorem is split into several results that are proven below in  chronological order.

\begin{proof} 
{\it (Estimate \eqref{eq:estimate_ltwo}).}  We integrate the equation for the pressure, Eq.~\eqref{eq:p}, in space to obtain
\begin{align*}
	\ddt \int_\Omega \p \d x- \int_\Omega \left( \left|\px\right|^2 + \gamma \p \pxx \right) \d x = \gamma \int_\Omega \p R \d x.
\end{align*}
An integration by parts in the second-order term yields
\begin{align*}
	\ddt \int_\Omega \p \d x+  (\gamma - 1) \int_\Omega \left| \px \right|^2  \d x  = \gamma \int_\Omega \p R \d x,
\end{align*}
having used the fact that $p_\gamma \frac{\partial p_\gamma}{\partial x}$ vanishes at the boundary due to the Neumann boundary conditions. Finally, let us integrate in time to get
\begin{align*}
	(\gamma-1)\int_0^T\int_\Omega \left|\px\right|^2 \d x \d t 
    =\gamma \int_0^T\int_\Omega \p R \, \d x \d t - \|\p(T)\|_{L^1(\Omega)} + \|\p(0)\|_{L^1(\Omega)} \leq C \gamma,
\end{align*}
as $\p$ and $R$ are bounded in $L^\infty(Q_T)$ uniformly in $\gamma$ and $\e$. We conclude by dividing by $\gamma$ to obtain the desired estimate.
\end{proof}

\begin{proof}
{\it (Estimate~\eqref{eq:estimate_bv}).} We begin by considering the equation for $\cone$ in Eq. \eqref{eq:system_ratios}. Upon differentiation in space, there holds
\begin{align*}
\fpartial t \conex = 
	&~ \fpartial x \left(\conex \px \right) + \conex F_1(\p) + \ctwox G_1(\p)\\
 	&- 2 \cone \conex F(\p) - \ctwo \conex G(\p) - \cone \ctwox G(\p) \\
 	&+ \px \left( \cone  F_1'(\p) + \ctwo G_1'(\p) - (\cone)^2 F'(\p) - \cone \ctwo G'(\p)\right).
\end{align*}
Multiplying this equation by $\sign(\tfrac{\partial \cone}{\partial x})$ and using Lemma \ref{lem:krushkov} and Remark \ref{rem:krushkov}, we obtain
\begin{align*}
\fpartial t \left|\conex\right| \leq &~ \fpartial x\left(\left|\conex\right|\p \right) + \left|\conex\right|\left( F_1(\p) - 2 \cone F(\p) - \ctwo G(\p) \right) \\
& +\ctwox \left(G_1(\p)   - \cone G(\p) \right) \sign\left(\conex\right)\\
& + \px \left( \cone F_1'(\p) + \ctwo G_1'(\p) - (\cone)^2 F'(\p) -\cone \ctwo  G'(\p)\right)\sign\left(\conex\right)\\
\leq &~\fpartial x\left(\left|\conex\right|\p \right) + C \left|\conex\right| + C \left|\frac{\partial c_\gamma^{(2)}}{\partial x}\right| + C \left| \px\right|,
\end{align*}
where the constants are independent of $\gamma$ and $\e$ and only depend on the $L^\infty$-bounds on $F_i,G_i$, as well as on the fact that $0\leq c_\gamma^{(i)}\leq 1$, $i=1,2$. 
Upon integrating in space we get
\begin{align*}
\ddt\int_\Omega  \left|\conex\right| \d x \leq C \int_\Omega \left( \left|\conex\right| + \left|\ctwox\right| \right) \d x + C \int_\Omega \left|\px\right| d x,
\end{align*}
where the first term has vanished as it was an exact derivative and the boundary terms vanish by the homogeneous Neumann boundary conditions. 

Performing the same manipulations on the equation for $c_\gamma^{(2)}$ and summing both, we finally get
\begin{align*}
\ddt\int_\Omega  \left(\left|\conex\right| + \left|\ctwox\right| \right) \d x \leq C \int_\Omega \left(\left|\conex\right| + \left|\ctwox\right| \right) \d x + C \int_\Omega \left|\px\right| \d x.
\end{align*}

By setting
\begin{align*}
	\psi(t):=\int_\Omega \left(\left|\conex\right| + \left|\ctwox\right| \right) \d x, 
\end{align*}
the previous inequality reads 
\begin{align*}
	\psi'(t) \leq C \psi(t) + C \int_\Omega \left|\px\right| \d x.
\end{align*}
An application of Gronwall's lemma yields 
\begin{align*}
	\psi(t) \leq C \psi(0) e^{Ct} + C \int_0^t \int_\Omega  e^{C(t-s)}  \left|\px\right| \d x \d s.
\end{align*}

From the uniform $L^2(Q_T)$-bounds on $\tfrac{\partial \p}{\partial x}$, we conclude that 
\begin{align*}
	\psi(t)\leq C \psi(0) + C.
\end{align*}

At this stage let us emphasise that none of the constants depends on $\gamma$ or $\e$. Finally, the term $\psi(0)$ is bounded by the assumptions on the initial data, cf. Eq. \eqref{eq:BVcond_init}.

For the densities $\none$, $\ntwo$, we start by estimating the total density $\n$. We differentiate Eq.~\eqref{eq:regularised_n} w.r.t. $x$, which yields
\begin{align*}
	\fpartial t \left( \partialx{n_{\gamma}}\right) =
	&  \frac{\partial^2}{\partial x^2} \left( \n \partialx {\p} \right) + 
    \n \left( \conex F(\p) + \ctwox G(\p) \right) \\ 
    &+ \partialx {n_{\gamma}} \left( \cone F(\p) + \ctwo G(\p) \right) + 
    \px \n \left( \cone F'(\p) + \ctwo G'(\p)\right).
\end{align*}

Using $\n \tfrac{\partial \p}{\partial x} = \gamma \p \tfrac{\partial \n}{\partial x}$ for the first term in the right-hand side, multiplying by $\sign(\tfrac{\partial \n}{\partial x}) = \sign(\tfrac{\partial \p}{\partial x})$ and  employing Lemma \ref{lem:krushkov} and Remark \ref{rem:krushkov}, we obtain
\begin{align*}
	\fpartial t \left| \partialx {n_{\gamma}}\right| & \leq \gamma 
     \frac{\partial^2}{\partial x^2} \left( \p \left|\partialx {n_{\gamma}}\right| \right) + 
    \n \left( \conex  F(\p) + \ctwox  G(\p)\right) \sign\left(\partialx {n_\gamma}\right) \\
    &\quad  + \left|\partialx {n_{\gamma}}\right| \left( \cone F(\p) + \ctwo G(\p) \right) + 
    \left|\px\right| \n \left( \cone F'(\p) + \ctwo G'(\p)\right).
\end{align*}
Upon integrating in space and using the zero Neumann boundary conditions, we get
\begin{align*}
	\ddt \int_\Omega \left| \partialx {n_{\gamma}}\right|\d x \leq  C\int_\Omega \left(
     \left| \conex\right| + \left|\ctwox  \right| + \left|\partialx {n_{\gamma}}\right|  + 
    \left|\px\right| \right) \d x,
\end{align*}
where the constant $C>0$ only depends on the $L^\infty$-bounds of $\n$, as well as $F_i,G_i$ and $F_i',G_i'$, for $i=1,2$. Using the $BV$-bounds from above we may further write
\begin{align*}
	\ddt \int_\Omega \left| \partialx {n_{\gamma}}\right|\d x \leq  C + C \int_\Omega 
 \left|\partialx {n_{\gamma}}\right| \d x + 
  \int_\Omega   \left|\px\right|\d x.
\end{align*}
Proceeding as before, we conclude that $\n$ is uniformly bounded in $BV$, cf. Definition \ref{def:initial_data}, Eq. \eqref{eq:BVcond_init}. To see that it provides the required $BV$ estimates for $\none$, $\ntwo$, we notice that the equality $n_\gamma^{(i)} =c_\gamma^{(i)} \n$ leads to 
\begin{align*}
\partialx {\n^{(i)}} = c_\gamma^{(i)} \partialx {\n} + \n \partialx {c_\gamma^{(i)}}
\end{align*}
for $i=1,2$. The $BV$ bounds for $\n$ and $\cone$ and $\ctwo$ together with the boundedness of $\n$ then imply the result and conclude the proof.
\end{proof}

%
%

\begin{proof}
{\it (Estimate ~\eqref{eq:estimate_w}).} 
For ease on notations, we set $R = \cone F(\p) + \ctwo G(\p)$, as before, and we recall that $R$ is bounded in $L^{\infty}(Q_T)$. Using Eqs. \eqref{eq:p} and \eqref{eq:system_ratios}, we want to differentiate the quantity $\w$ in time. We obtain
\begin{align*}
	\partialt \w = \underbrace{\; \; \fpartial t \pxx \; \;}_{I_1} + \underbrace{\; \; \partialt R\; \;}_{I_2}.
\end{align*}
We shall address both terms individually beginning with $I_2$.
\begin{align*}
	I_2 
	&= \fpartial t \left(\cone F(\p) + \ctwo G(\p) \right)\\
	&=\underbrace{\partialt \cone F(\p) + \partialt \ctwo G(\p)}_{I_{2,1}} + \underbrace{\left( \cone F'(\p) + \ctwo G'(\p)\right)\partialt \p}_{I_{2,2}}.
\end{align*}
Using the equations for $c_\gamma^{(i)}$, Eq. \eqref{eq:system_ratios}, we obtain
\begin{align*}
	I_{2,1} &= F(\p) \left(\partialx {\cone} \px + \cone F_1(\p) + \ctwo G_1(\p) - \left(\cone\right)^2 F(\p) - \cone \ctwo G(\p)\right)\\
	&\quad +G(\p)\left(\partialx {\ctwo} \px + \cone F_2(\p) + \ctwo G_2(\p) - \left(\ctwo\right)^2 G(\p) - \cone \ctwo F(\p)\right)\\
	&= \px \left( F(\p) \conex + G(\p)\ctwox\right) + S_2,
\end{align*}
where we introduced
\begin{align}
	\label{eq:Stwoequation}
	\begin{split}
	S_2 &= F(\p) \left( \cone F_1(\p) + \ctwo G_1(\p) - \left(\cone\right)^2 F(\p) - \cone \ctwo G(\p) \right)  \\
    &\quad + G(\p) \left( \cone F_2(\p) + \ctwo G_2(\p) - \left(\ctwo\right)^2 G(\p) - \cone \ctwo F(\p) \right),
    \end{split}
\end{align}
as a short hand. Similarly, we may use the equation for $\p $ to obtain
\begin{align*}
	I_{2,2} &= \left( \cone F'(\p) + \ctwo G'(\p) \right) \gamma p_{\gamma} \w + S_1,
\end{align*}
where
\begin{align*}
	S_1 = \left(\cone F'(\p) + \ctwo G'(\p)\right)\left|\partialx {p_{\gamma}}\right|^2.
\end{align*}

Now, recall that $I_2 = I_{2,1}+ I_{2,2}$ so that
\begin{equation}
\label{eq:contribution_reaction_terms}
	\begin{split}
	I_2 &=
    \gamma \p \w \left( \cone F'(\p) + \ctwo G'(\p) \right) \\
    &\quad +\px \left( \conex \; F(\p) +  \ctwox \; G(\p) \right) + S_1 + S_2.
    \end{split}
\end{equation}

Recalling the pressure equation, Eq. \eqref{eq:p}, we obtain
\begin{equation}
\label{eq:contribution_pressure_term}
	\begin{split}
	I_1  &= \frac{\partial^2}{\partial x^2}\left(\left|\px\right|^2 + \gamma \p \w\right)\\
    &=2 \fpartial x \left( \px \; \pxx\right) + \gamma \frac{\partial^2}{\partial x^2}(\p \w)\\
    &=2 \fpartial x \left(\px \; \w - \px R\right) + \gamma\frac{\partial^2}{\partial x^2}(\p \w)\\
    &=2 \fpartial x \left(\px \; \w\right) + \gamma \frac{\partial^2}{\partial x^2} (\p \w)  - \underbrace{2\fpartial x\left(\px R\right)}_{I_{1,1}}.
	\end{split}
\end{equation}
Let us note that
\begin{align*}
	I_{1,1} &= \fpartial x \left(\px \, R\right) \\
	&= \frac{\partial^2 \p}{\partial x^2}  \, R + 
    		\px \left( \conex \, F(\p) + \ctwox \, G(\p) \right)  +  S_1\\
    		&=(\w-R)R + 
    		\px \left( \conex \, F(\p) + \ctwox \, G(\p) \right)  +  S_1,
\end{align*}
having used the fact that $\w = \tfrac{\partial^2 \p}{\partial x^2} + R$.

Combining the estimates on the pressure-related term, Eq. \eqref{eq:contribution_pressure_term}, with the reaction-related terms, Eq. \eqref{eq:contribution_reaction_terms}, we obtain
\begin{align}
\label{eq:almost_eqn_for_w}
	\begin{split}
	\partialt \w &= \gamma \left[\frac{\partial^2}{\partial x^2}\left(\p\w\right) + \p\w \left( \cone F'(\p) + \ctwo G'(\p) \right)\right] +2 \fpartial x\left(\px \, \w\right) \\
    &\quad  - 2(\w-R)  R- \px \left( \conex \, F(\p) + \ctwox \, G(\p) \right) - S_1 + S_2.
    \end{split}
\end{align}
We now note that $S_1 \leq 0$ since $F$ and $G$ are decreasing functions, cf. Definition \ref{def:feasible_growth}. Moreover $|S_2| \leq C$, since all the terms in Eq. \eqref{eq:Stwoequation} are uniformly bounded in $\gamma$ and $\e$. Thus we may write $S_2 \geq - |S_2| \geq -C$ whence
\begin{align*}
	\begin{split}
	\partialt \w & \geq \gamma \left[ \frac{\partial^2}{\partial x^2}(\p\w) + \p \w \left( \cone F'(\p) + \ctwo G'(\p) \right) \right] + 2\fpartial x\left(\px \, \w\right) - 2 \w R\\
    &\quad  - \px \left( \conex \, F(\p) + \ctwox \, G(\p) \right) - C.
    \end{split}
\end{align*}
Multiplying by $\sign_{-}(\w)$
yields
\begin{align*}
	\begin{split}
	\fpartial t \neg{\w} &\leq \gamma \left[ \frac{\partial^2}{\partial x^2}\left(\p\neg{\w}\right) + \p\neg{\w} \left( \cone F'(\p)+ \ctwo G'(\p)\right) \right] + 2 \fpartial x\left(\px \neg{\w}\right)  \\
    &\quad - 2\neg{\w} R + \px \left( \conex \, F(\p) + \ctwox G(\p) \right) \mathbbm{1}_{\{\w \leq 0\}} + C,
    \end{split}
\end{align*}
where we used Lemma \ref{lem:krushkov} and Remark \ref{rem:krushkov}. Using the $L^\infty$-bounds on the growth terms  we estimate the right-hand side further
\begin{align}
    \label{eq:101118}
	\begin{split}
	\fpartial t \neg{\w} &\leq \gamma \left[ \frac{\partial^2}{\partial x^2}\left(\p\neg{\w}\right) + \p\neg{\w} \left( \cone F'(\p)+ \ctwo G'(\p)\right) \right]  \\
    &\quad + 2 \fpartial x\left(\px \neg{\w}\right) + C \neg{\w} +  C\left|\px\right| \left( \left|\conex\right| + \left|\ctwox\right| \right)  + C.
    \end{split}
\end{align}
Finally we note that
\begin{align*}
     \cone F'(\p) + \ctwo G'(\p) \leq \max(F'(\p), G'(\p)) \leq -C < 0,
\end{align*}
since $\cone+\ctwo = 1$, which is why Eq.~\eqref{eq:101118} can be further estimated and we obtain
\begin{align*}
	\begin{split}
	\fpartial t \neg{\w} &\leq \gamma \left[ \frac{\partial^2}{\partial x^2}\left(\p\neg{\w}\right) - C \p\neg{\w} \right] + 2 \fpartial x\left(\px \neg{\w}\right) + C \neg{\w} \\
    &\quad  +  C\left|\px\right| \left( \left|\conex\right| + \left|\ctwox\right| \right)  + C.
    \end{split}
\end{align*}
Then, integrating in space yields
\begin{align}
	\label{eq:estimate_negative}
	\ddt \int_{\Omega} \neg{\w}\d x \leq - C \gamma \int_\Omega \p \neg{\w} \d x + C\int_\Omega \neg{\w} \d x+ C  \left\|\px\right\|_{L^\infty(\Omega)}+ C,
\end{align}
where we used the $BV$-estimates on the $\ci$'s~\eqref{eq:estimate_bv}.
Next we show that the $L^\infty$-norm of $\tfrac{\partial \p}{\partial x}$ can be estimated in terms of $|\w|_-$. First, due to Sobolev's embedding theorem in one dimension, we have 
\begin{align*}
	\left\|\px\right\|_{L^\infty(\Omega)} 
	&\leq \left\|\pxx\right\|_{L^1(\Omega )}\\
	&\leq \int_\Omega \left( |\w| + |R| \right) \d x \\
	&=\int_\Omega \left( \w + 2 |\w|_{-} + |R| \right) \d x\\ 
	&\leq\int_\Omega \left( \pxx + 2|R| +2 |\w|_{-} \right) \d x.
\end{align*}
Thus we have
\begin{align}
	\label{eq:2410_2331}
	\left\|\px\right\|_{L^\infty(\Omega)} 
	&\leq C + C\int_\Omega  |\w|_{-} \d x.
\end{align}
Using Eq. \eqref{eq:2410_2331} in Eq. \eqref{eq:estimate_negative} we get
\begin{equation*}
	\ddt \int_{\Omega} \neg{\w} \d x \leq - C \gamma \int_{\Omega} \neg{\w} \d x + C\int_\Omega \neg{\w} \d x + C.
\end{equation*}
The above equation and the Gronwall lemma yield the first estimate of~\eqref{eq:estimate_w}, provided that we can bound $\| \w(0) \|_{L^1(\Omega)}$ independently of $\gamma$, $\e$, which in turn requires a $L^1(\Omega)$ estimate for $\frac{\partial^2 \p}{\partial x^2}(0)$. Such an estimate is provided by~\eqref{eq:technical_w}.

Moreover, recalling \eqref{eq:estimate_negative}, we also get
\begin{align}
\label{eq:estimate_pw_minus}
\gamma \int_0^T \int_\Omega \p \neg{\w} \d x\leq  C.
\end{align}
From Eq. \eqref{eq:p}, we easily infer
\begin{align*}
\gamma \int_{0}^{T}  \int_\Omega  \p \w \d x &= \int_{0}^{T} \int_\Omega \left( \partialt \p - \left|\px\right|^2 \right) \d x \leq \| \p(T) \|_{L^1(\Omega)} -  \| \p(0) \|_{L^1(\Omega)} \leq C.
\end{align*}
The above inequality, together with \eqref{eq:estimate_pw_minus}, completes the proof since it provides the required estimate for $\gamma\, \int_0^T \int_\Omega \p |\w |$. 
\end{proof}

Among byproducts of the previous proof, we highlight the following estimates on $\tfrac{\partial \p}{\partial x}$, $\tfrac{\partial^2 \p}{\partial x^2}$, which will be useful for the proof of the main results.
\begin{corollary}
\label{eq:L1_estimates_px_pxx}
There holds
\begin{align*}
	\left\|\frac{\partial p_{\gamma, \e}}{\partial x}\right\|_{L^\infty(\Omega)} \leq C,\qquad \text{as well as}\qquad 
	\left\|\frac{\partial^2 p_{\gamma, \e}}{\partial x^2}\right\|_{L^1(\Omega)} \leq C.
\end{align*}
\end{corollary}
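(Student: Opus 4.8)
The plan is to read off both estimates directly from the chain of inequalities already established in the proof of~\eqref{eq:estimate_w}; no new machinery is needed, since the delicate work was precisely the uniform control of $\int_\Omega \neg{\w} \, \d x$.

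First, for the $L^\infty$ bound on $\px$, I would simply invoke inequality~\eqref{eq:2410_2331},
\[
\left\|\px\right\|_{L^\infty(\Omega)} \leq C + C \int_\Omega \neg{\w} \, \d x,
\]
which was obtained through the one-dimensional Sobolev embedding $W^{1,1}(\Omega) \hookrightarrow L^\infty(\Omega)$ together with $\px(\pm L) = 0$ (a consequence of the Neumann conditions) and the decomposition $|\w| = \w + 2\neg{\w}$. Combining this with the second bound in~\eqref{eq:estimate_w}, namely $\sup_{0 \leq t \leq T} \int_\Omega \neg{\w} \, \d x \leq C$, yields the claimed uniform $L^\infty$ bound at once.

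Second, for the $L^1$ bound on $\pxx$, I would use the identity $\pxx = \w - R$ to write $\int_\Omega |\pxx| \, \d x \leq \int_\Omega |\w| \, \d x + \int_\Omega |R| \, \d x$, and then expand $\int_\Omega |\w| \, \d x = \int_\Omega \w \, \d x + 2 \int_\Omega \neg{\w} \, \d x$. The Neumann conditions force $\int_\Omega \pxx \, \d x = 0$, hence $\int_\Omega \w \, \d x = \int_\Omega R \, \d x$, which is controlled by the uniform $L^\infty$ bound on $R$ and the finiteness of $|\Omega| = 2L$. The remaining term $\int_\Omega \neg{\w} \, \d x$ is again bounded by~\eqref{eq:estimate_w}, so every contribution is uniform in $\gamma$ and $\e$.

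There is no genuine obstacle here: both statements amount to bookkeeping on top of~\eqref{eq:estimate_w}. The only points that deserve care are that the embedding $W^{1,1} \hookrightarrow L^\infty$ is genuinely one-dimensional, and that the positive part of $\w$ is never estimated directly but is handled through the exact identity $\int_\Omega \w \, \d x = \int_\Omega R \, \d x$; the genuinely subtle quantity thus remains $\int_\Omega \neg{\w} \, \d x$, whose uniform bound was the substance of the preceding proof.
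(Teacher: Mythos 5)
Your proposal is correct and follows essentially the same route as the paper: the corollary is exactly the ``byproduct'' of the proof of~\eqref{eq:estimate_w}, obtained by feeding the uniform bound $\sup_{0\leq t\leq T}\int_\Omega \neg{w_{\gamma,\e}}\,\d x \leq C$ back into the chain of inequalities culminating in~\eqref{eq:2410_2331}, with the Neumann conditions killing $\int_\Omega \frac{\partial^2 p_{\gamma,\e}}{\partial x^2}\,\d x$ just as you use them. Your only cosmetic deviation is to phrase the cancellation as the exact identity $\int_\Omega w_{\gamma,\e}\,\d x = \int_\Omega R_{\gamma,\e}\,\d x$ rather than the paper's inequality $w_{\gamma,\e} \leq \frac{\partial^2 p_{\gamma,\e}}{\partial x^2} + |R_{\gamma,\e}|$, which changes nothing of substance.
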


\section{Proof of the main results}
\label{Section4}
This section is dedicated to passing to the incompressible limit $\gamma \rightarrow +\infty$. As before, we assume that the functions $F_i$, $G_i$ are feasible, that the initial data is well-prepared and that the initial data is compactly supported, \textit{i.e.}, \eqref{InitialSupport}.

\begin{theorem}[Strong compactness of the pressure]
\label{thm_compact_p}
	Let $1\leq q < \infty$ be arbitrary. Then there exists a function $p_\infty\in L^\infty(Q_T)$ such that, upon extraction of a sub-family, there holds
	\begin{align}
		\p \longrightarrow p_\infty,
	\end{align}
	pointwise and strongly in $L^q(Q_T)$.
\end{theorem}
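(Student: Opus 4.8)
The plan is to obtain relative compactness of $\{\p\}$ in $L^1(Q_T)$ by controlling space and time translations uniformly in $\gamma$ and $\e$, and then to upgrade this to strong convergence in $L^q(Q_T)$ using the uniform $L^\infty$ bound. Throughout I would drop $\e$, as announced, and rely only on the \textit{a priori} estimates of Theorem~\ref{thm_apriori} together with Corollary~\ref{eq:L1_estimates_px_pxx}.

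First I would record the bounds already available: the uniform bound $0<\p\leq P_H$, which places the family in $L^\infty(Q_T)$, and the uniform Lipschitz-in-space bound $\left\|\px\right\|_{L^\infty(\Omega)}\leq C$ from Corollary~\ref{eq:L1_estimates_px_pxx}, valid for a.e.\ $t$ with $C$ independent of $\gamma,\e$. Combined with the uniformly compact support furnished by Proposition~\ref{Compact}, the Lipschitz bound controls spatial translations, yielding $\int_0^T\int_\Omega|\p(t,x+h)-\p(t,x)|\,\d x\,\d t\leq C|h|$.

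The key step is to bound $\partialt{\p}$ in $L^1(Q_T)$ uniformly. I would read off from the pressure equation~\eqref{eq:p} that $\partialt{\p}=\left|\px\right|^2+\gamma\,\p\,\w$; the first term is bounded in $L^\infty(Q_T)$ by the Lipschitz bound, while the second is precisely the quantity controlled by the Aronson--B\'enilan-type estimate~\eqref{eq:estimate_w}, namely $\gamma\int_0^T\int_\Omega\p\,|\w|\,\d x\,\d t\leq C$. A Fubini argument then converts this uniform $L^1$ bound on the time derivative into control of time translations, $\int_0^{T-\tau}\int_\Omega|\p(t+\tau,x)-\p(t,x)|\,\d x\,\d t\leq \tau\,\|\partial_t\p\|_{L^1(Q_T)}\leq C\tau$.

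With both translation estimates in hand, I would extend $\p$ by zero outside its (uniformly compact) support and invoke the Riesz--Fr\'echet--Kolmogorov criterion (equivalently, the Aubin--Lions--Simon lemma, with $\p$ bounded in $L^\infty(0,T;W^{1,1}(\Omega))$ and $\partialt{\p}$ bounded in $L^1(Q_T)$, using the compact embedding $W^{1,1}(\Omega)\hookrightarrow\hookrightarrow L^1(\Omega)$ in one dimension) to extract a sub-family converging strongly in $L^1(Q_T)$ and, after a further extraction, almost everywhere to a limit $p_\infty$ inheriting $0\leq p_\infty\leq P_H$. Interpolating the $L^1$ convergence against the uniform $L^\infty$ bound, via $\|\p-p_\infty\|_{L^q(Q_T)}^q\leq(2P_H)^{q-1}\|\p-p_\infty\|_{L^1(Q_T)}$, upgrades this to strong convergence in $L^q(Q_T)$ for every finite $q$. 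The main obstacle is the uniform $L^1$ bound on $\partialt{\p}$: the term $\gamma\,\p\,\w$ is formally of size $\gamma$, and only the sharp estimate~\eqref{eq:estimate_w} renders it uniformly integrable; once this is secured, the remaining compactness argument is entirely standard.
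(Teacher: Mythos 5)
Your proposal is correct and takes essentially the same approach as the paper: both proofs bound $\partialt{\p}$ in $L^1(Q_T)$ by writing it via the pressure equation~\eqref{eq:p} as $\left|\px\right|^2 + \gamma\,\p\,\w$ and invoking the \textit{a priori} estimates of Theorem~\ref{thm_apriori} (in particular the Aronson--B\'enilan-type bound~\eqref{eq:estimate_w}), control the space variation using the bounds on $\partialx{\p}$, and conclude by Fr\'echet--Kolmogorov together with the uniform $L^\infty$ bound $\p \leq P_H$ to upgrade $L^1$ compactness to $L^q$ convergence. The only cosmetic differences are that you spell out the translation estimates explicitly (via the Fubini argument) where the paper simply invokes the stronger derivative bounds, and that you use the Lipschitz bound of Corollary~\ref{eq:L1_estimates_px_pxx} for the space shifts where the paper uses the $L^1$ bound on $\partialx{\p}$.
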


\begin{proof}
For a given sequence $(u_\gamma)_{\gamma}$ defined on $(0,T)\times\Omega$ and bounded in $L^1(Q_T)$, we recall that if we control both the time shifts and space shifts as follows
\begin{align}
	\int_0^{T-h} \int_\Omega \left| u_\gamma(t+h,x+a) - u_\gamma(t,x) \right| \rightarrow 0,
\end{align}
as $(h,a) \rightarrow (0,0)$, independently of $\gamma$,
then $(u_\gamma)_{\gamma}$ has compact closure in $L^1(Q_T)$ by the Fr\'echet-Kolmogorov compactness theorem. This is of course true if the following stronger estimate holds:
\begin{align}
 	\int_0^T \int_\Omega \left( \left|\partial_t u_\gamma \right| + \left|\partial_x u_\gamma\right| \right)\d x\d t \leq C.
\end{align}
Finally, if furthermore $(u_\gamma)_\gamma$ is (uniformly in $\gamma$) in $L^\infty(Q_T)$, then it is also compact in $L^q(Q_T)$ for any $1 \leq q < \infty$, after applying Lebesgue's dominated convergence theorem.
This general result is crucial in passing to the limit $\gamma \rightarrow +\infty$ and will use it in this proof, abusively referring to it as the Fr\'echet-Kolmogorov Theorem.
From the estimates of Theorem~\ref{thm_apriori}, we clearly have
\begin{align*}
\int_0^T \int_\Omega \left|\partialt \p\right|  \d x\d t\leq \int_0^T \int_\Omega \left( \left|\px\right|^2 + \gamma \p |\w| \right)\d x \d t \leq C.
\end{align*}
Thanks to the compactness assumption, we have the required bound:
\begin{align}\label{eq:compactness_p}
	\int_0^T \int_\Omega \left(\left|\partial_t \p\right| + \left|\partial_x \p\right|\right)\d x\d t \leq C, 
\end{align}
from which strong convergence in $L^1(Q_T)$ follows.
Note that this convergence holds even pointwise after possibly passing to another sub-sequence. Finally, since $\p \leq P_H$, this bound also holds at the limit.
\end{proof}

\begin{theorem}[Complementarity formula]
\label{thm_complementarity}
We may pass to the limit in Eq. \eqref{eq:p} to obtain the complementarity relation 
\begin{align}
	p_\infty \left[\frac{\partial^2 p_\infty}{\partial x^2}  + n_\infty^{(1)}F(p_\infty) + n_\infty^{(2)}G(p_\infty) \right] = 0,
\end{align}
in $\mathcal{D}'(Q_T)$, where $n_\infty^{(i)}$, $i=1,2$ and $p_{\infty}$  weakly satisfy the equations
\begin{equation}
\label{eq:limit_np}
\begin{cases}
\partialt {n_{\infty}^{(1)}} = & \fpartial x \left(n_{\infty}^{(1)} \partialx {p_{\infty}} \right) + n_{\infty}^{(1)} F_1(p_{\infty}) + n_{\infty}^{(2)} G_1(p_{\infty}),\\
\partialt {n_{\infty}^{(2)}} = & \fpartial x \left(n_{\infty}^{(2)} \partialx {p_{\infty}} \right) + n_{\infty}^{(1)} F_2(p_{\infty}) + n_{\infty}^{(2)} G_2(p_{\infty}),
\\
\qquad 0= &p_{\infty} (1-n_{\infty}) = 0 \qquad \text{a.e.},
\end{cases}
\end{equation}
starting from $n_{\infty}^{(i)}(0) = n_{\infty, \rm init}^{(i)}$, $i=1,2$, where $n_{\infty} = n_{\infty}^{(1)} + n_{\infty}^{(2)}$.
\end{theorem}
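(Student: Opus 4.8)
The plan is to pass to the limit in the pressure equation~\eqref{eq:p} and, in parallel, in the weak formulation~\eqref{eq:weak} of the density equations, feeding in the strong compactness of $\p$ from Theorem~\ref{thm_compact_p} and the \emph{a priori} bounds of Theorem~\ref{thm_apriori}. First I would collect the available convergences (up to a subsequence, as $\e\to0$ and $\gamma\to\infty$): $\p\to p_\infty$ strongly in $L^q(Q_T)$ and a.e.; $\px$ bounded in $L^\infty(Q_T)$, hence weakly-$\star$ convergent with limit $\partial_x p_\infty$; $\pxx$ bounded in $L^\infty(0,T;L^1(\Omega))$, hence $\pxx\to\partial_{xx}p_\infty$ in $\mathcal D'(Q_T)$ with $\partial_{xx}p_\infty$ a bounded measure; and, since $\ni$ is bounded in $BV$ in space by~\eqref{eq:estimate_bv} and enjoys time-compactness through its own equation, $\ni\to n_\infty^{(i)}$ strongly in $L^1(Q_T)$ (Aubin--Lions/Fr\'echet--Kolmogorov). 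The algebraic constraint is then immediate: $\p(1-\n)=\n^\gamma(1-\n)\to0$ uniformly (the map $s\mapsto s^\gamma(1-s)$ is $O(1/\gamma)$ on $[0,n_H]$), whence $p_\infty(1-n_\infty)=0$ after passing to the limit in the product.

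The decisive step — and the main obstacle — is to upgrade $\px\rightharpoonup\partial_x p_\infty$ to \emph{strong} convergence in $L^2(Q_T)$. It is indispensable because the complementarity relation conceals a quadratic gradient term: from $\p\,\partial_{xx}\p=\tfrac12\partial_{xx}(\p^2)-\left|\px\right|^2$, the first summand converges in $\mathcal D'$ by strong convergence of $\p^2$, but the second demands $\left|\px\right|^2\to\left|\partial_x p_\infty\right|^2$, which is false under weak convergence alone. In one space dimension the $BV$ estimates suffice to obtain it. For a.e.\ $t$, $\px(t,\cdot)$ is bounded in $BV(\Omega)\cap L^\infty(\Omega)$ uniformly in $\gamma,\e$, the $L^\infty$ control coming from the one-dimensional embedding $\|\px\|_{L^\infty(\Omega)}\le\|\pxx\|_{L^1(\Omega)}$ (this is precisely where dimension one is essential). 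Time translations are controlled by the interpolation inequality $\|\partial_x u\|_{L^1(\Omega)}\le C\|u\|_{L^1(\Omega)}^{1/2}\|\partial_{xx}u\|_{L^1(\Omega)}^{1/2}$ applied to $u=\p(t+h)-\p(t)$: since $\partial_{xx}u$ is bounded in $L^1$ and $\int_0^{T-h}\|u\|_{L^1}\,\d t\le h\,\|\partial_t\p\|_{L^1(Q_T)}\le Ch$, the time shifts of $\px$ are $O(h^{1/2})$ uniformly in $\gamma,\e$. Together with the $BV$-in-$x$ bound on space translations, the Fr\'echet--Kolmogorov theorem gives compactness of $\px$ in $L^1(Q_T)$, which the uniform $L^\infty$ bound promotes to $L^2(Q_T)$. (Alternatively one invokes Aubin--Lions--Simon with $\px$ bounded in $L^\infty(0,T;BV)$ and $\partial_t\px$ bounded in $L^1(0,T;W^{-1,1})$.)

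With strong $L^2$ convergence of $\px$ secured, I would conclude as follows. Rewriting~\eqref{eq:p} as $\p\,\w=\tfrac1\gamma\big(\partialt{\p}-\left|\px\right|^2\big)$, or directly using the bound $\gamma\int_0^T\!\!\int_\Omega\p\left|\w\right|\,\d x\,\d t\le C$ from~\eqref{eq:estimate_w}, gives $\p\,\w\to0$ in $L^1(Q_T)$. It then remains to identify the limit of $\p\,\w=\p\,\partial_{xx}\p+\p\,R_{\gamma,\e}$. The diffusive part yields $\tfrac12\partial_{xx}(p_\infty^2)-\left|\partial_x p_\infty\right|^2=p_\infty\,\partial_{xx}p_\infty$ in $\mathcal D'$ by the previous step, while for the reaction part I would use the identity $\p\,\ci=\p^{(\gamma-1)/\gamma}\,\ni$ together with $\p^{(\gamma-1)/\gamma}\to p_\infty$ strongly (dominated convergence, since $0\le\p\le P_H$) and $\ni\to n_\infty^{(i)}$, giving $\p\,R_{\gamma,\e}\to p_\infty\big(n_\infty^{(1)}F(p_\infty)+n_\infty^{(2)}G(p_\infty)\big)$; this is also what forces the limit relation to be written with $n_\infty^{(i)}$ rather than the ill-defined fractions $c_\infty^{(i)}$. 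Equating the two expressions for $\lim\p\,\w$ then yields~\eqref{complementarityR}. Finally, the two evolution equations in~\eqref{eq:limit_np} follow by passing to the limit in~\eqref{eq:weak}: the flux $\ni\px\to n_\infty^{(i)}\partial_x p_\infty$ by strong$\times$weak convergence, the reaction terms converge by continuity of $F_i,G_i$ and dominated convergence, and the initial data converge by the well-preparedness assumption~\eqref{eq:init}.
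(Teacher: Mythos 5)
Your proposal is correct and follows essentially the same route as the paper: strong $L^2(Q_T)$ compactness of $\partial_x p_{\gamma,\e}$ via Fr\'echet--Kolmogorov (your Gagliardo--Nirenberg interpolation bound giving $O(\sqrt h)$ time shifts is exactly the estimate the paper derives by hand with mollifiers and the choice $\delta=\sqrt h$), strong $L^1$ compactness of the densities from the BV bounds and their equations, the identity $p_{\gamma,\e}\,c^{(i)}_{\gamma,\e}=p_{\gamma,\e}^{1-1/\gamma}\,n^{(i)}_{\gamma,\e}$ to identify the reaction part of the limit, and passage to the limit in the weak forms of the density equations. The only cosmetic deviation is your proof of $p_\infty(1-n_\infty)=0$ via the uniform bound $\sup_{[0,n_H]}\left|s^\gamma(1-s)\right|=O(1/\gamma)$, where the paper instead passes to the limit in the algebraic identity $n_{\gamma,\e}\,p_{\gamma,\e}^{(\gamma-1)/\gamma}=p_{\gamma,\e}$; both arguments are equally valid.
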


Before we begin the proof of the complementarity formula in the incompressible limit, we recall some properties of mollifiers and convolutions.
\begin{remark}[\label{rem:DiracNMoll}Properties of mollifiers]
We set
	\begin{align*}
	\varphi(x):= \left\{ 
	\begin{array}{ll}
		0,&|x|\geq1,\\
		M\exp \left(-\frac{1}{1-|x|^2}\right),& |x|<1,
		\end{array}
		\right.
	\end{align*}
	and recall that $\varphi\in C_c^\infty(\R)$ is a nonnegative, symmetric function. We choose $M$ so that $\varphi$ has mass one. Furthermore, we define the Dirac sequence $(\varphi_\delta)_{\delta> 0}$ by
\begin{equation*}
\varphi_\delta(x) = \delta^{-1} \varphi\left(\delta^{-1}x\right).
\end{equation*}
Then,  there holds, for any function $f\in W^{1,q}(\Omega)$ (with $K:= \int_\mathbb{R} |x| \varphi(x) \, dx$)
\begin{equation*}
	\|f\star\varphi_\delta - f\|_{L^q(\Omega)} \leq K \|f'\|_{L^q(\Omega)} \delta .
\end{equation*}
Moreover the derivative is given by
\begin{align*}
	\varphi_\delta'(x) = -2 \frac{\delta ^{-1}x}{\left[1 - (\delta^{-1}x)^2\right]^2} \varphi_\delta(x),
\end{align*}
whence $\|\varphi_\delta(x)\|_{L^1(\Omega)}\leq C\delta^{-1}$.
\end{remark}
We have now gathered all information necessary for passing to the limit in the pressure equation, Eq. \eqref{eq:p}.

\begin{proof}
We rewrite the equation for $\p$ and multiply by a test function in order to obtain 
\begin{equation*}
	\frac{1}{\gamma}\int_0^T\int_\Omega \phi(t,x) \left( \partialt \p - \left|\px\right|^2\right) \d x\d t = \int_0^T \int_\Omega \phi(t,x) \p \w \d x\d t.
\end{equation*}
From the bounds
\begin{equation*}
\int_0^T \int_\Omega \left|\partialt \p\right| \d x\d t \leq C, \; \int_0^T \int_\Omega  \left|\partialx \p\right|^2\d x\d t \leq C,
\end{equation*}
the left-hand side must converge to zero, meaning
that \begin{align}
		\p \w \longrightarrow 0,
	\end{align}
in the distributional sense. It now remains to identify the limit. We write 
\begin{equation*}
    \int_0^T \int_\Omega \phi \p \w  \d x= \int_0^T \int_\Omega \phi  \p \pxx \d x + \int_0^T \int_\Omega \phi \p  \left(\cone F(\p) +  \ctwo G(\p)\right)\d x,
\end{equation*}
and we treat both terms independently. 
\par

\textit{First term.}
For the first one, we write 
\begin{equation*}
	\p \frac{\partial^2 \p}{\partial x^2} = \frac{1}{2} \frac{\partial^2\p^2}{\partial x^2} - \left(\px\right)^2,
\end{equation*}
leading to
\begin{align*}
	\int_0^T \int_\Omega \phi  \p \pxx \d x = \frac{1}{2} \int_0^T\int_\Omega \frac{\partial^2 \phi}{\partial x^2} \,  \p^2 \d x - \int_0^T\int_\Omega \phi \left|\px\right|^2\d x.
\end{align*}
We may pass to the limit in the first term by Theorem \ref{thm_compact_p}. The second term requires to analyse compactness for $\tfrac{\partial \p}{\partial x}$, a problem we again approach with the Fr\'echet-Kolmogorov Theorem as the main tool. 
Its space derivative is already controlled since, from Corollary~\ref{eq:L1_estimates_px_pxx}, 
\begin{align*}
	\sup_{0 \leq t \leq T}\left\|\pxx(t)\right\|_{L^1(\Omega)} \leq C. 
\end{align*}

For the time derivative we will use the Fr\'echet-Kolmogorov  compactness method, we shall prove that, as $h$ and tends to $0$,
\begin{align*}
\int_0^{T-h} \int_\Omega \left|\px(t+h,x) - \px (t,x) \right| \d x\d t \rightarrow 0.
\end{align*}

Let us continue with the analysis. For the ease of notations, we set
\begin{equation*}
	u_h(t,x):=  \px(t+h,x) - \px (t,x).
\end{equation*}
By comparing $u$ to its mollified version, the triangular equality yields
\begin{align}
\label{eq:timeshift_estimate}
\begin{split}
\int_0^{T-h} \int_\Omega |u_h(t,x)| \, \d x \d t &\leq \int_0^{T-h}\int_\Omega |u_h(t,x) - u_h(t,\cdot)*\varphi_\delta(x)| \, \d x \d t\\
&\qquad   + \int_0^{T-h}\int_\Omega |u_h(t,\cdot)*\varphi_\delta(x)| \, \d x \d t. 
\end{split}
\end{align}

Here, $\delta$ is a function (to be specified later on) of $h$ tending to $0$. 
By Remark \ref{rem:DiracNMoll}, there holds 
\begin{equation*}
\int_0^{T-h}\int_\Omega |u_h(t,x) - u_h(t,\cdot)*\varphi_\delta(x)| \, \d x \d t \leq C h \int_0^{T-h}  \left|\left|\pxx(t+h) - \pxx (t)\right|\right|_{L^1(\Omega)} \d t \leq C h,
\end{equation*}
thanks to Corollary \ref{eq:L1_estimates_px_pxx}.
which proves that the right-hand side converges to zero as $\delta \to 0$, uniformly in $\gamma$. It suffices to show that the same result holds for the second integral.
We write
\begin{align*}
\int_0^{T-h}\int_\Omega \left|u_h(t,\cdot)*\varphi_\delta(x)\right| \, \d x\d t
& \leq \int_0^{T-h}\int_\Omega |(\p(t+h) - \p(t)) \star \varphi_\delta'(x)|d x\d t,
\end{align*}
after exchanging derivatives in the convolution. We now bound with the estimate on the derivative of the mollifier, cf. Remark \ref{rem:DiracNMoll}:
\begin{align*}
\int_0^{T-h}\int_\Omega |u_h(t,\cdot) \star \varphi_\delta| \d x \d t \leq & \int_0^{T-h}\int_\Omega |\varphi'_\delta(y)| \d y \, \int_\Omega|\p(t+h,x) -\p(t,x)| \,\d x \d t \\
\leq  &\frac{C}{\delta} \int_0^{T-h}\int_\Omega |\p(t+h,x)- \p(t,x)| \,\d x\d t \\
\leq & \frac{C}{\delta} \int_0^{T-h}\int_\Omega \abs{ \int_t^{t+h} \partialt \p(s,x) \d s} \d x \d t.
\end{align*}
We rearrange the integrals and obtain
\begin{align*}
\int_0^{T-h}\int_\Omega |u_h(t,\cdot) \star \varphi_\delta| \d x \d t
\leq & \frac{C}{\delta} \int_\Omega \int_0^{T-h} \int_t^{t+h} \left|\partialt \p(s,x)\right| \d s \d x \d t\\
=& \frac{C}{\delta} \int_\Omega \int_{s=0}^T\int_{t=\max(0,s-h)}^{\min(T-h,s)} \left|\partialt \p (s,x)\right|\d t\, \d s\,  \d x \\
=& \frac{C}{\delta}\int_\Omega \int_0^T (\min(T-h,s) - \max(0,s))\left|\partialt \p \right|\d s \d x\\
\leq & \frac{Ch}{\delta}\int_\Omega\int_0^T\left|\partialt \p(s,x)\right| \d s\, \d x.
\end{align*}
Using the fact that $\tfrac{\partial \p}{\partial t}$ is bounded in $L^1(0,T;L^1(\Omega))$ uniformly in $\gamma$, cf. \eqref{eq:compactness_p}, we obtain
\begin{align*}
\int_0^{T-h}\int_\Omega |u_h(t,\cdot) \star \varphi_\delta| \d x \d t \leq C \sqrt{h},
\end{align*}
having set $\delta = \sqrt{h}$.
Thus we conclude that the entire right-hand side of Eq. \eqref{eq:timeshift_estimate} converge to  zero as $h\to0$. Thus the time shifts are also controlled and we may infer the strong compactness of $\tfrac{\partial \p}{\partial x}$.\\

\textit{Second term.}
For the second term involving $\p  (\cone F(\p) + \ctwo G(\p))$, we note that
\begin{align*}
	\p  (\cone F(\p) + \ctwo G(\p)) = \p^{1 - 1/\gamma}  (\n^{(1)} F(\p) + \n^{(2)} G(\p)).
\end{align*}
Passing to the limit requires weak convergence of $\none$ and $\ntwo$, since the strong convergence of the pressure would then allow us to pass to the limit in the second term, \textit{i.e.},
\begin{align*}
	\int_0^T \int_\Omega \phi \p  \left(\cone F(\p) +  \ctwo G(\p)\right)\d x \rightarrow 	\int_0^T \int_\Omega \phi p_\infty  \left(n_\infty^{(1)} F(p_\infty) +  n_\infty^{(2)} G(p_\infty)\right)\d x.
\end{align*}

For the convergence of $\none$, $\ntwo$, we use the Fr\'echet-Kolmogorov Theorem. For the space derivative, the result is already provided by estimate~\eqref{eq:estimate_bv}.
For the time derivative, it suffices to use the equation for the $n_\gamma^{(i)}$'s. We focus on $\none$ and expand the divergence term to get 
\begin{equation*}
\partialt {\none} = \partialx \none \partialx \p +  \none\, \pxx + \none F_1(\p) + \ntwo G_1(\p).
\end{equation*}
The two last terms are in $L^\infty(\Omega)$ while the two first terms are controlled in $L^1(\Omega)$ thanks 
to Corollary \ref{eq:L1_estimates_px_pxx}. Consequently, we have strong convergence of the densities $(\none, \ntwo)$ to some $(n_\infty^{(1)}, n_\infty^{(2)})$ in $L^1(Q_T)$. 

\textit{Limit equation for $n_\infty^{(1)}$, $n_\infty^{(2)}$.} 
We aim at passing to the limit in 
\begin{equation*}
\partialt {\n^{(1)}} = \fpartial x \left(\n^{(1)} \partialx {\p} \right) + \none F_1(\p) + \ntwo G_1(\p).
\end{equation*}

The reaction terms readily pass to the limit since $\p$ converges strongly and the $n_\gamma^{(i)}$'s also. For the divergence term, we use the same results and the strong convergence of $\tfrac{\partial \p}{\partial x}$ established above.

\textit{Initial condition.} 
The limit Cauchy problem is completely identified with the initial condition $n_{\infty, \rm init}^{(1)}$, $n_{\infty, \rm init}^{(2)}$, thanks to~\eqref{eq:init}.

\textit{Other relations.}
Equation~\eqref{eq:limit_np} is obtained by writing $\n \p^{\frac{\gamma}{\gamma-1}} = \p$, and using the convergences of $\p$ and $\n$, respectively, to conclude.

Note that similar arguments allow to prove the strong convergence of $\cone$, $\ctwo$ to some $c^{(1)}_\infty$, $c^{(2)}_\infty$. These limits will satisfy the relations 
\begin{equation*}
c^{(i)}_\infty n_\infty = n^{(i)}_\infty, \qquad c_\infty^{(i)} p_\infty  = n_\infty^{(i)} p_\infty, \qquad i=1,2.
\end{equation*}
\end{proof}

\begin{remark}
We again emphasise that the solutions of the regularised system converge to those of the original one as $\e$ tends to $0$. This proves that the limit system obtained by letting both $\e$ tend to $0$ and $\gamma$ tend to $+\infty$ is also the system obtained from the original one by letting $\gamma$ tend to $+\infty$. 
\end{remark}

\section{Segregation property}
\label{Section5}

In order to establish the preservation of the segregation property $n^{(1)}n^{(2)} \equiv 0$, we begin with extracting subsequences such that the initial population fractions pass to the limit $\gamma \rightarrow \infty$, $\e \rightarrow 0$. More precisely, we notice that upon extracting subsequences,
there exist $c^{(1)}_{\infty, \rm init}$, $c^{(2)}_{\infty, \rm init} \in L^1(\Omega_0)$ such that, as $\e \rightarrow 0$, $\gamma \rightarrow \infty$, 
\begin{equation}
\label{eq:initial_fraction}
    c^{(i)}_{\gamma, \e}(0) \rightarrow c^{(i)}_{\infty, \rm init} \qquad \text{in } L^1(\Omega_0). 
\end{equation}
Indeed, Helly's selection theorem applies since the quotients are bounded both in $L^\infty$ and in BV by assumption~\eqref{eq:BVcond_init}. 

Our approach is then based on the observation that, in the absence of cross-reactions ($F_2 = G_1 = 0$),  a  direct manipulation shows that $\n \, \cone \ctwo$ satisfies the equation 
\begin{equation}
    \label{egregationFund}    
\partialt{ \n \, \cone \ctwo}  =\fpartial x\left( \n \, \cone \ctwo\px\right) + \n \, \cone \ctwo\left(\ctwo F_1(\p) + \cone G_2(\p) \right).
\end{equation}
If initially $ \n \, \cone \ctwo =0$, any weak solution will propagate the segregation property. We explain below that we can pass to the limit in this equation in the weak sense.

\begin{theorem} [Segregation property for $n^{(1)}_{\infty} , \, n^{(2)}_{\infty} $]
Under the assumptions of Theorem~\ref{thm_complementarity} and in the absence of cross-reactions ($F_2 = G_1 = 0$), equation~\eqref{egregationFund}  holds for the limits when $\gamma \to \infty$. The segregation property holds, \textit{i.e.}, with the notations of Theorem~\ref{thm_complementarity}, if $n^{(1)}_{\infty, \rm init}  \,\, n^{(2)}_{\infty, \rm init} = 0$, a.e. in $\Omega_0$, then $n^{(1)}_{\infty} (t)\, n^{(2)}_{\infty}(t) = 0$  a.e. in $Q_T$.
\end{theorem}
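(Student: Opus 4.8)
The plan is to derive the limit equation by passing to the limit in the weak form of~\eqref{egregationFund}, and then to propagate segregation by a Gronwall argument on the $L^1$ norm of the \emph{nonnegative} quantity $\n\cone\ctwo$. As a preliminary, I would record that at fixed $\gamma$ and $\e$ identity~\eqref{egregationFund} is a purely algebraic consequence of~\eqref{eq:regularised_n} and the fraction equations~\eqref{eq:system_ratios}. Indeed, with $F_2=G_1=0$ (so $F=F_1$, $G=G_2$) the reaction part of~\eqref{eq:system_ratios} collapses, using $\cone+\ctwo=1$, to $\partialt{\cone}=\partialx{\cone}\px+\cone\ctwo\left(F_1(\p)-G_2(\p)\right)$ and its symmetric counterpart for $\ctwo$; applying the product rule to $\partialt(\n\cone\ctwo)$, the transport contributions reassemble into $\fpartial x\left(\n\cone\ctwo\px\right)$, while the reaction contributions simplify to $\n\cone\ctwo\left(\ctwo F_1(\p)+\cone G_2(\p)\right)$. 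This holds strongly thanks to the smoothness of $n_{\gamma,\e}$ at the $\e$ level.

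Next, I would pass to the limit in the weak formulation, tested against $\phi\in C^1_{\rm comp}(\R\times[0,T))$,
\begin{align*}
\int_0^T\int_\Omega\left[-\n\cone\ctwo\,\partialt\phi+\n\cone\ctwo\,\px\,\partialx\phi-\n\cone\ctwo\big(\ctwo F_1(\p)+\cone G_2(\p)\big)\phi\right]\d x\d t=\int_\Omega \n\cone\ctwo(0)\,\phi(0)\,\d x.
\end{align*}
Using the convergences already established in Theorem~\ref{thm_complementarity} — $\p\to p_\infty$ pointwise and in $L^q(Q_T)$, $\px\to\partial_x p_\infty$ in $L^2(Q_T)$, and $\none,\ntwo,\cone,\ctwo$ converging (strongly in $L^1(Q_T)$, uniformly bounded in $L^\infty$) to $n_\infty^{(1)},n_\infty^{(2)},c_\infty^{(1)},c_\infty^{(2)}$ — products of uniformly bounded, a.e.\ convergent sequences converge in every $L^q(Q_T)$ by dominated convergence, so $\n\cone\ctwo\to v_\infty:=n_\infty c_\infty^{(1)}c_\infty^{(2)}\geq 0$. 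The reaction term converges by continuity and boundedness of $F_1,G_2$; the transport term converges because $\n\cone\ctwo$ (bounded, strongly convergent) times $\px\to\partial_x p_\infty$ in $L^2$ converges in $L^1$; and the initial term converges by~\eqref{eq:initial_fraction} (Helly's theorem applied via~\eqref{eq:BVcond_init}) to $v_{\rm init}:=n_{\infty,\rm init}c_{\infty,\rm init}^{(1)}c_{\infty,\rm init}^{(2)}$. Crucially, keeping the divergence form in the weak formulation means the ill-defined product $v_\infty\,\frac{\partial^2 p_\infty}{\partial x^2}$ of an $L^\infty$ function with a bounded measure is never invoked.

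For the propagation, I would first translate the hypothesis. Since $c_{\infty,\rm init}^{(i)}n_{\infty,\rm init}=n_{\infty,\rm init}^{(i)}$ (the relation $c_{\gamma,\e}^{(i)}n_{\gamma,\e}=n_{\gamma,\e}^{(i)}$ passed to the limit at $t=0$), one has $n_{\infty,\rm init}^{(1)}n_{\infty,\rm init}^{(2)}=n_{\infty,\rm init}\,v_{\rm init}$, so $n_{\infty,\rm init}^{(1)}n_{\infty,\rm init}^{(2)}=0$ forces $v_{\rm init}=0$ a.e.\ (it vanishes automatically where $n_{\infty,\rm init}=0$). Since all solutions are supported in a fixed compact subset of $\Omega$ (Proposition~\ref{Compact}), I choose $\chi\in C_c^\infty(\Omega)$ with $\chi\equiv 1$ on that support and test the limit equation against $\chi(x)$ (localising in time); the transport contribution $\int_\Omega v_\infty\,\partial_x p_\infty\,\partialx\chi\,\d x$ vanishes because $\partialx\chi=0$ on $\supp v_\infty$, leaving
\begin{align*}
\ddt\int_\Omega v_\infty\,\d x=\int_\Omega v_\infty\big(c_\infty^{(2)} F_1(p_\infty)+c_\infty^{(1)} G_2(p_\infty)\big)\,\d x\le C\int_\Omega v_\infty\,\d x,
\end{align*}
with $C$ depending only on $\|F_1\|_\infty,\|G_2\|_\infty$ since $c_\infty^{(1)}+c_\infty^{(2)}=1$. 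As $v_\infty\geq 0$ and $\int_\Omega v_\infty(0)\,\d x=0$, Gronwall's lemma gives $v_\infty\equiv 0$, whence $n_\infty^{(1)}n_\infty^{(2)}=n_\infty\,v_\infty=0$ a.e.\ in $Q_T$.

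I expect the main obstacle to lie in the limit passage of the second paragraph rather than in the Gronwall step: one must ensure that the \emph{fractions} $\cone,\ctwo$ — for which no limit equation is available — nevertheless converge strongly enough that both the triple product and the reaction coefficient converge, and that the \emph{initial} fractions converge (hence the reliance on the BV bound~\eqref{eq:BVcond_init} and Helly). The virtue of the argument is that $v_\infty$ obeys a \emph{closed}, divergence-form equation whose only dependence on the poorly controlled fractions is through a uniformly bounded zeroth-order coefficient, so neither the measure $\frac{\partial^2 p_\infty}{\partial x^2}$ nor any uniqueness statement for the fraction limits is needed to close the estimate.
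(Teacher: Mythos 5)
Your proposal is correct and takes essentially the same route as the paper: derive/record the closed equation~\eqref{egregationFund} for $\n\cone\ctwo$, pass to the limit in its weak form using the convergences of Theorem~\ref{thm_complementarity} (strong $L^1$ convergence of densities and fractions, $L^2$ convergence of $\px$, Helly/BV for the initial fractions), and then propagate $v_\infty\equiv 0$ by Gronwall's lemma with a cutoff test function equal to $1$ on the supports, so that the transport term drops out. The additional details you supply — the algebraic verification of~\eqref{egregationFund} at the $(\gamma,\e)$ level and the explicit translation of the hypothesis $n^{(1)}_{\infty,\rm init}n^{(2)}_{\infty,\rm init}=0$ into $v_{\rm init}=0$ via $n^{(1)}n^{(2)}=n\cdot v$ — are exactly what the paper leaves implicit, and your closing observation about avoiding the ill-defined product with $\frac{\partial^2 p_\infty}{\partial x^2}$ matches the paper's remark on why $\none\ntwo$ itself is not a usable quantity.
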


\begin{proof} Firstly, we pass to the limit in Equation~\eqref{egregationFund}. Then, writing $ \n(0, \cdot) \cone(0, \cdot) \ctwo(0, \cdot) =  \none(0, \cdot) \ctwo(0, \cdot)$, 
we can pass to the limit in the intial data since both terms are in $L^\infty(\Omega_0)$ and have a limit in $L^1(\Omega_0)$.

Applying the convergence results in Theorem~\ref{Complementarity}, we can also pass to the limit in the weak form of Equation~\eqref{egregationFund}, see~\eqref{eq:weak}. Therefore we obtain for $u= n_{\infty}c_{\infty}^{(1)}c_{\infty}^{(2)}$, the equation
\begin{equation}
 \label{segregation:equ}
	\begin{cases}
	\partialt {u} = \fpartial x \left(u \partialx {p_{\infty}} \right) +  u \left( c^{(2)}_\infty F_1(p_{\infty}) +  c^{(1)}_\infty G_2(p_{\infty}) \right), 
	\\
	u(0,\cdot)= n_{\infty, \rm init} c^{(1)}_{\infty, \rm init}  c^{(2)}_{\infty, \rm init} \in L^1(\Omega_0)  .
	\end{cases}
\end{equation}	 	
	
Secondly, we wish to show that $u(t,\cdot)=0$ when $u(0,\cdot)=0$. To do so, we use the definition of the weak form~\eqref{eq:weak} with a compactly supported  test function $ \phi(t,x)$ which takes the value $1$ on $Q_T$ and arrive to
\begin{equation*}
\ddt \int u(t,x) \phi(t,x) \d x \leq \| c^{(2)}_\infty F_1(p_{\infty}) +  c^{(1)}_\infty G_2(p_{\infty}) \|_\infty \int u(t,x) \phi(t,x) \d x .
\end{equation*}
Using Gronwall's lemma, we conclude that $ \int u(t,x) \phi(t,x) \d x  = \int_{Q_T}  u(t,x) \d x=0$ and thus $u\equiv 0$.
\end{proof}

\begin{remark} This proof of the segregation property can be adapted to more general solutions of~\eqref{segregation:equ} than those with compact support. Using test functions with a truncation parameter, we merely need $u\geq 0$ be integrable and $\tfrac{\partial p_{\infty}}{\partial x} $ be bounded. 
\end{remark}

\begin{remark}
   The product $v:= \none \ntwo$, is not well adapted because it solves 
   \begin{align*}    \partialt v =\fpartial x\left(v \px\right) + v\left(F_1(\p) + G_2(\p) + \pxx\right).
\end{align*}
This is not a well defined equation  in the limit,  with the available regularity on $p_\infty$. Indeed, $ \tfrac{\partial^2 \p}{\partial x^2} $ is bounded in $L^1$ but  $\tfrac{\partial^2 p_\infty}{\partial x^2}$ is  just a measure at the limit~\cite{Perthame2014b}.
	In other words, one cannot hope to derive the segregation property from the above equation on $v$.
\end{remark}

\section{Conclusions and open questions}
\label{Section6}

We have established the incompressible limit of the two-species system~\eqref {System} in one space dimension. The mathematical interest arises from vacuum states which generate a free boundary described by a Hele-Shaw type system. Our approach is based on an extension of the Aronson-B\'enilan estimates which we use in an $L^1$ setting rather than using upper bounds as usually done. Any improvement in the method and the estimate itself could be of interest. There are three major difficulties to extend this estimate to higher dimension. Firstly, we work in BV as in~\cite{Carrillo2017}. Secondly, some exchanges of derivatives cannot be performed in more than one dimension. Thirdly, we use that $\p$ is Lipschitz continuous (using Sobolev injections) which is crucial in estimates such  as~\eqref{eq:estimate_negative}.
\\

Several extensions could be of interest but require new ideas. The question of the regularity theory for the free boundary is completely open and faces the difficulty of weak estimates compared to the one species case in~\cite{MePeQu}.  Including drift terms is of interest in view of \cite{Kim_Yao, DaMeMaRo, MeSa20162}.
Also including different mobilities for the two species  as in~\cite {LoLoPe} is an open question. 
\medskip

\textbf{Acknowledgments.} F.B. and B.P. have received funding from the European Research Council (ERC) under the European Union's Horizon 2020 research and innovation program (grant agreement No 740623). M.S. acknowledges the kind invitation to LJLL funded by the previous grant. Furthermore, M.S. received funding for two research visits from the Doris Chen Mobility Award awarded by Imperial College London. C.P.
acknowledges support from the Swedish Foundation of Strategic Research grant AM13-004.

\appendix

\section{Variations of Krushkov}
Here we present the rigorous argument for passing the negative part and the absolute value into derivatives in a conservation law 
\begin{align}
	\label{eq:convectiondiffusion}
		\partialt f = \fpartial x\left(af\right) + \frac{\partial^2}{\partial x^2}(bf)
\end{align}
with no-flux boundary conditions.
\begin{lemma}
\label{lem:krushkov}
	Assume  $a\in C_b^1(\R,\R)$ and $b\in C_b^2(\R, \R_+)$. Let $\phi \in C^1(\R,  \R_+)$ be a twice differentiable function, and assume that $f$ satisfies Eq. \eqref{eq:convectiondiffusion}.
	Then there holds
	\begin{align*}
				\fpartial t \phi(f) = \fpartial x\left(a\phi(f)\right) + \frac{\partial^2}{\partial x^2}(b\phi(f)) + \left(\partialx a + \frac{\partial^2 b}{\partial x^2}\right)\left[f\phi'(f) - \phi(f)\right] - a \phi''(f) \left|\partialx f\right|^2.
	\end{align*}
	Moreover, if $(\phi_\e)_{\e > 0}$ is a family of smooth and convex approximations of $\phi\in\{|\cdot|_-,|\cdot|_+,|\cdot|\}$ then there holds for almost every $x\in \Omega$ and $t>0$
	\begin{align*}
				\fpartial t \phi(f) \leq  \fpartial x\left(a\phi(f)\right) + \frac{\partial^2}{\partial x^2}(b\phi(f)) .
	\end{align*}
\end{lemma}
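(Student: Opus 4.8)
The plan is to treat the equality as a pointwise chain-rule computation and then extract the inequality by a convex-approximation argument. Since the lemma is applied to the regularised problem, where $f$ is a classical solution of~\eqref{eq:convectiondiffusion}, the derivatives $\partial_t f$, $\partial_x f$, $\partial_{xx} f$ exist pointwise and every manipulation below is licit at each point $(t,x)$. For $\phi \in C^1(\R,\R_+)$ twice differentiable I would start from $\partial_t \phi(f) = \phi'(f)\,\partial_t f = \phi'(f)\big[\partial_x(af) + \partial_{xx}(bf)\big]$ and rewrite each term on the right so that $\phi$ is moved inside the derivatives.

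For the transport part I use the elementary identity $\phi'(f)\,\partial_x(af) = \partial_x\big(a\phi(f)\big) + (\partial_x a)\big[f\phi'(f) - \phi(f)\big]$, obtained by expanding $\partial_x(af) = f\,\partial_x a + a\,\partial_x f$ and $\partial_x(a\phi(f)) = \phi(f)\,\partial_x a + a\phi'(f)\,\partial_x f$. For the diffusion part I expand $\partial_{xx}(bf) = f\,\partial_{xx} b + 2\,\partial_x b\,\partial_x f + b\,\partial_{xx} f$ together with $\partial_{xx}(b\phi(f)) = \phi(f)\,\partial_{xx} b + 2\,\partial_x b\,\phi'(f)\,\partial_x f + b\phi''(f)\,|\partial_x f|^2 + b\phi'(f)\,\partial_{xx} f$, which after subtraction yields $\phi'(f)\,\partial_{xx}(bf) = \partial_{xx}\big(b\phi(f)\big) + (\partial_{xx}b)\big[f\phi'(f)-\phi(f)\big] - b\,\phi''(f)\,|\partial_x f|^2$. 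Adding the two identities produces the announced formula; I stress that the coefficient of the quadratic term $\phi''(f)\,|\partial_x f|^2$ is the \emph{diffusion} coefficient $b\geq 0$ (the transport contributes no $\phi''$-term), since this sign is exactly what drives the second part.

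For the inequality I would apply the identity just proved to each member $\phi_\e$ of the convex approximating family of $\phi \in \{|\cdot|_-, |\cdot|_+, |\cdot|\}$. Convexity gives $\phi_\e'' \geq 0$, and because $b \geq 0$ the term $-b\,\phi_\e''(f)\,|\partial_x f|^2$ is nonpositive and may be discarded, turning the equality into $\partial_t \phi_\e(f) \leq \partial_x(a\phi_\e(f)) + \partial_{xx}(b\phi_\e(f)) + (\partial_x a + \partial_{xx}b)\,[f\phi_\e'(f) - \phi_\e(f)]$. It then remains to let $\e \to 0$: the first two terms converge to $\partial_x(a\phi(f))$ and $\partial_{xx}(b\phi(f))$ in $\mathcal{D}'$ because $\phi_\e(f) \to \phi(f)$ uniformly ($f$ being bounded), while the correction bracket is annihilated by the algebraic observation that each of the three limiting functions is positively homogeneous of degree one, so that by Euler's relation $s\phi'(s) = \phi(s)$, i.e. $f\phi'(f) - \phi(f) = 0$ pointwise (including at $f=0$). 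Since $\partial_x a + \partial_{xx} b$ is bounded and $f\phi_\e'(f) - \phi_\e(f)$ is uniformly bounded and converges pointwise to $0$, dominated convergence removes this term, giving the claimed inequality for a.e. $(t,x)$.

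I expect the only genuinely delicate point to be this passage to the limit across the kink of $\phi$: one must ensure that the concentrating second-derivative term never reappears, which here is guaranteed purely by its favourable sign ($b\geq 0$ and $\phi_\e$ convex) rather than by any quantitative bound, and that the first-order correction $f\phi_\e'(f)-\phi_\e(f)$ vanishes in the limit, for which the degree-one homogeneity of $|\cdot|_\pm$ and $|\cdot|$ is essential. Everything else reduces to routine manipulations relying on the smoothness of the regularised solution and the boundedness of $a$, $b$ and $f$.
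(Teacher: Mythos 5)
Your proof is correct and follows essentially the same route as the paper's: multiply the equation by $\phi_\e'(f)$, rearrange with the chain rule so that $\phi_\e$ sits inside the derivatives, discard the quadratic term using convexity and the nonnegativity of the diffusion coefficient, and remove the bracket $\left(\partial_x a + \partial_{xx} b\right)\left[f\phi_\e'(f)-\phi_\e(f)\right]$ in the limit $\e \to 0$ (the paper does this via a uniform $O(\e)$ bound on $\left\|f\phi_\e'(f)-\phi_\e(f)\right\|_\infty$, you via pointwise convergence, degree-one homogeneity and dominated convergence --- an immaterial difference). It is worth noting that your computation silently corrects a typo: the coefficient of $\phi''(f)\left|\partial_x f\right|^2$ is the diffusion coefficient $b$, not $a$ as written in the lemma statement and in the paper's own proof, and this matters because only $b\in C_b^2(\R,\R_+)$ is assumed nonnegative, so the sign argument yielding the inequality works precisely in your corrected form.
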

\begin{proof}
	Multiplying Eq. \eqref{eq:convectiondiffusion} by $\phi_\varepsilon'(f)$ yields the first statement after reversing some applications of the chain rules and algebraic simplifications. As for the second statement, we note that $-a \phi_\varepsilon''(f)| \partial_x f|^2  \leq 0$,	independently of $\varepsilon>0$. Moreover, due to the boundedness of $\partial_x a + \partial_x^2 b$ we obtain
	\begin{align*}
				\fpartial t \phi_\varepsilon(f) \leq \fpartial x\left(a\phi_\varepsilon(f)\right) + \frac{\partial^2}{\partial x^2}(b\phi_\varepsilon(f)) + C \left\|f\phi_\varepsilon'(f) - \phi_\varepsilon(f)\right\|_{\infty}.
	\end{align*}
	Due to the uniformity of the approximation, the norm is of order $\varepsilon$ and passing to the limit yields the second statement.
\end{proof}

\begin{lemma}
Assume  $a\in C_b^1(\R,\R)$ and $b\in C_b^2(\R, \R_+)$. Let $\phi \in C^1(\R,  \R_+)$ be a convex function, then there holds
	\begin{align*}
		\ddt \int_\Omega \phi(f) \d x &\leq C \int_\Omega \left[\phi'(f)f -\phi(f)\right] \left(\partialx a + \frac{\partial^2 b}{\partial x^2}\right)\d x.
	\end{align*}
	Moreover, if $(\phi_\e)_{\e > 0}$ is a family of smooth and convex approximations of $\phi\in\{|\cdot|_-,|\cdot|_+,|\cdot|\}$ then there holds for almost every $x\in \Omega$ and $t>0$
	\begin{align*}
		0 \leq \phi(f(t)) \leq \phi(f(0)) .
	\end{align*}
	
\end{lemma}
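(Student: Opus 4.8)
The plan is to derive both assertions from the pointwise identity of Lemma~\ref{lem:krushkov}, by integrating it in space and then specialising to the one-homogeneous choices of $\phi$. Throughout I would work with a smooth convex approximation $\phi_\e$ and pass to the limit $\e \to 0$ only at the very end, exactly as in the proof of Lemma~\ref{lem:krushkov}.

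\textbf{First statement.} Starting from the identity
\begin{align*}
\fpartial t \phi_\e(f) = \fpartial x\left(a\phi_\e(f)\right) + \frac{\partial^2}{\partial x^2}(b\phi_\e(f)) + \left(\partialx a + \frac{\partial^2 b}{\partial x^2}\right)\left[f\phi_\e'(f) - \phi_\e(f)\right] - a \phi_\e''(f) \left|\partialx f\right|^2,
\end{align*}
I would integrate over $\Omega$. The two divergence terms drop out because the no-flux boundary conditions force both $a\phi_\e(f)$ and $\fpartial x (b\phi_\e(f))$ to vanish at $x=\pm L$. Since $\phi_\e$ is convex we have $\phi_\e''\geq 0$, so the last term $-a\phi_\e''(f)\left|\partialx f\right|^2$ is nonpositive — using the sign of $a$ exactly as already exploited in Lemma~\ref{lem:krushkov} — and may be discarded to produce an inequality. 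Passing to the limit $\e\to 0$ and bounding $\partialx a + \frac{\partial^2 b}{\partial x^2}$ in $L^\infty(\Omega)$ then yields the first claim.

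\textbf{Second statement.} The crucial observation is that each $\phi\in\{\neg{\cdot},\pos{\cdot},\abs{\cdot}\}$ is positively homogeneous of degree one, i.e. $\phi(\lambda s)=\lambda\phi(s)$ for $\lambda>0$. Differentiating this relation in $\lambda$ at $\lambda=1$ (Euler's identity) gives $s\,\phi'(s)=\phi(s)$ at every point of differentiability, so the bracket $f\phi'(f)-\phi(f)$ vanishes a.e. Consequently the right-hand side of the first statement is zero and $\ddt\int_\Omega \phi(f)\,\d x\leq 0$; the map $t\mapsto \int_\Omega \phi(f(t))\,\d x$ is therefore nonincreasing, which is the (integrated) content of $0\leq \phi(f(t))\leq \phi(f(0))$, with nonnegativity immediate from $\phi\geq 0$. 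Choosing $\phi=\neg{\cdot}$ is precisely the mechanism behind the propagation of nonnegativity, as in Proposition~\ref{prop:nonnegativity}.

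\textbf{Main obstacle.} The delicate point is that the smooth approximant $\phi_\e$ is only \emph{approximately} one-homogeneous: the quantity $s\phi_\e'(s)-\phi_\e(s)$ is not identically zero but is of order $\e$ in sup-norm, which is exactly the bound controlled at the end of the proof of Lemma~\ref{lem:krushkov}. I would therefore need to verify that
\begin{align*}
\int_\Omega \left(\partialx a + \frac{\partial^2 b}{\partial x^2}\right)\left[f\phi_\e'(f)-\phi_\e(f)\right]\d x \longrightarrow 0
\end{align*}
as $\e\to 0$; this follows from $\norm{\partialx a + \frac{\partial^2 b}{\partial x^2}}_{L^\infty(\Omega)}<\infty$, the finiteness of $\abs{\Omega}$, and the $O(\e)$ bound on $\norm{s\phi_\e'(s)-\phi_\e(s)}_\infty$. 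Together with the uniform sign control of the diffusion term $-a\phi_\e''(f)\left|\partialx f\right|^2$, this is the only estimate that genuinely requires care; everything else is a direct integration of the Lemma~\ref{lem:krushkov} identity.
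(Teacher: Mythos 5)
Your proof is correct and is essentially the paper's own argument run in a different order: the paper establishes this lemma by carrying out, under the time derivative, exactly the integrations by parts that produce the pointwise identity of Lemma~\ref{lem:krushkov}, so integrating that identity over $\Omega$, cancelling the boundary contributions with the no-flux condition, discarding the dissipation term by convexity, and using the uniform $O(\e)$ smallness of $s\phi_\e'(s)-\phi_\e(s)$ reproduces the same computation; your Euler-identity explanation of that smallness is a nice way of making the paper's ``it is classical'' explicit.

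One caveat you should repair: the coefficient of the dissipation term is $b$, not $a$. The statement of Lemma~\ref{lem:krushkov} contains a typo there, as both its own proof and the paper's proof of the present lemma show --- the latter derives
\begin{equation*}
-\,b\,\phi''(f)\left|\partialx{f}\right|^2
\end{equation*}
after two integrations by parts. Since $a\in C_b^1(\R,\R)$ carries no sign hypothesis, ``the sign of $a$'' is not an available tool; nonpositivity of the dissipation term follows from $b\geq 0$ (that is, $b\in C_b^2(\R,\R_+)$) together with $\phi_\e''\geq 0$. With that one-word correction your argument goes through verbatim.
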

\begin{proof}
	The proof is straightforward and we give it here only for the reader's convenience. We compute
	\begin{align*}
		\ddt \int_\Omega \phi(f)\d x =\underbrace{\int_\Omega \phi'(f) \fpartial x\left(af\right)\d x }_{I_1} + \underbrace{\int_\Omega  \phi'(f) \frac{\partial^2}{\partial x^2}(bf)\d x }_{I_2}.
	\end{align*}
	We treat the advective term, $I_1$, and the diffusion term, $I_2$, separately. We have
	\begin{align*}
		I_1 = \int_\Omega \partialx a \phi'(f)f + a\frac{\partial \phi(f)}{\partial x}\d x =\int_\Omega \partialx a \left[\phi'(f)f-\phi(f)\right]\d x + \int_{\partial \Omega}a \phi(f)\d S_x.
	\end{align*}
	As for the second term we obtain
	\begin{align*}
		I_2 &= \int_\Omega \phi'(f)f \frac{\partial^2 b}{\partial x^2} + 2\partialx b \partialx {\phi(f)} + \phi'(f) b\frac{\partial^2 f}{\partial x^2}\d x\\
		&=\int_\Omega \phi'(f)f \frac{\partial^2 b}{\partial x^2} + \partialx b \partialx {\phi(f)}+ \partialx b \partialx {\phi(f)} + \phi'(f) b\frac{\partial^2 f}{\partial x^2}\d x\\
		&=\int_\Omega \left[\phi'(f)f - \phi(f)\right] \frac{\partial^2 b}{\partial x^2} + \partialx b \partialx {\phi(f)} + \phi'(f) b\frac{\partial^2 f}{\partial x^2} \d x + \int_{\partial \Omega}\partialx b \phi(f)\d S_x,
	\end{align*}
	by an integration by parts in the second term. Another integration by parts then yields
	\begin{align*}
	I_2 	&=\int_\Omega \left[\phi'(f)f - \phi(f)\right] \frac{\partial^2 b}{\partial x^2}  - b \phi''(f) \left|\frac{\partial f}{\partial x}\right|^2 \d x + \int_{\partial \Omega}\partialx b \phi(f) +  b \partialx {\phi(f)} \, \d S_x.
		\end{align*}
	Combining $I_1$ and $I_2$ we obtain
	\begin{align*}
		\ddt \int_\Omega \phi(f) \d x &= \int_\Omega \left[\phi'(f)f -\phi(f)\right] \left(\partialx a + \frac{\partial^2 b}{\partial x^2}\right)\d x + \int_\Omega b \phi''(f)\left|\partialx f\right|^2\d x \\
		&\quad + \int_{\partial \Omega} a \phi(f)  + \partialx b \phi(f) + b \frac{\partial \phi(f)}{\partial x} \d S_x.
	\end{align*}
	Using the fact that $b\geq 0$ and the fact that $a,b$ have bounded derivatives we get
	\begin{align*}
		\ddt \int_\Omega \phi(f) \d x &\leq C \left|\phi'(f)f -\phi(f)\right| + \int_{\partial \Omega} a \phi(f)  +\fpartial x\left(b \phi(f)\right) \d S_x.
	\end{align*}
	Finally, using the no flux condition we obtain
	\begin{align}
		\label{eq:2510_1324}
		\ddt \int_\Omega \phi(f) \d x &\leq C \left|\phi'(f)f -\phi(f)\right|,
	\end{align}
	which concludes the first part of the proof. For the second statement we simply note that the inequality is satisfied for any $\e>0$. It is classical that the modulus, the positive part, and the negative part can be uniformly approximated such that the right-hand side of Eq. \eqref{eq:2510_1324} is of order $\e$. Passing to the limit we obtain
	\begin{align*}
		\ddt \int_\Omega \phi(f)\d x \leq 0,
	\end{align*}
	whence $0 \leq \phi(f(t)) \leq \phi(f(0))$, which concludes the proof.
\end{proof}
\begin{remark}
	\label{rem:krushkov}
	Note that the assumptions on the functions $a,b$ can be weakened from having `bounded derivatives' to having integrable derivatives, \textit{i.e.},  $\partial_x a, \partial_{xx} b \in L^q(\Omega,\R)$.
\end{remark}

\section{Energy}
\label{Energy}

\begin{proposition}
Let $H_1(p) := \int_0^{p} F(z) \, \d z$ and $H_2(p) := \int_0^{p}  G(z) \, \d z$ for $p \geq 0$. Then, the energy
\begin{equation*}
\mathcal{E}(t) := \int_\Omega \left(  \frac{1}{2}\left|\frac{\partial p_{\gamma, \e}}{\partial x} \right|^2 - c^{(1)}_{\gamma,\e} H_1(p_{\gamma, \e}) - c^{(2)}_{\gamma,\e} H_2(p_{\gamma, \e})\right) \d x
\end{equation*}
is such that, for a constant  $C$ is independent of $\gamma$ and $\e$,
\begin{equation}
\label{eq:energy_inequality}
\mathcal{E}'(t) + \gamma \int_{\Omega} p_{\gamma,\e} w_{\gamma,\e} ^2 \d x \leq C.
\end{equation}

\end{proposition}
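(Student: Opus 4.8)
The plan is to differentiate $\mathcal{E}$ directly in time. Since at the regularised $\e$-level the total density $\n$ (hence $\p$ and the fractions $\ci$) is a classical solution, the differentiations under the integral sign and the integrations by parts below are fully justified; as in the other proofs I drop the subscript $\e$. I would first handle the gradient term. Differentiating and integrating by parts once, with boundary terms vanishing because $\px=0$ at $x=\pm L$ by the Neumann condition, gives
\[
\ddt \int_\Omega \tfrac12 \left|\px\right|^2 \d x = - \int_\Omega \pxx \,\partialt \p\, \d x .
\]
Inserting the pressure equation~\eqref{eq:p} in the form $\partialt \p = |\px|^2 + \gamma\p\w$ and writing $\pxx = \w - R$ (with $R := \cone F(\p) + \ctwo G(\p)$), the cubic term $\int_\Omega \pxx|\px|^2 \d x = \tfrac13 \int_\Omega \partial_x\!\big[(\px)^3\big]\d x$ vanishes by the same boundary condition, leaving $-\gamma\int_\Omega \p\w^2\,\d x + \gamma\int_\Omega R\,\p\,\w\,\d x$.

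Next I would differentiate the two potential terms. Using $H_1' = F$, $H_2' = G$ and the identity $\cone F(\p) + \ctwo G(\p) = R$,
\[
\ddt \int_\Omega \big(\cone H_1(\p) + \ctwo H_2(\p)\big)\d x = \int_\Omega \Big( \partialt \cone\, H_1(\p) + \partialt \ctwo\, H_2(\p) + R\,\partialt \p \Big)\d x .
\]
The mechanism that makes the estimate work is the cancellation of the stiff terms: since $R\,\partialt\p = R|\px|^2 + \gamma R\,\p\,\w$, the contribution $-\gamma\int_\Omega R\p\w\,\d x$ coming from this line exactly cancels the $+\gamma\int_\Omega R\p\w\,\d x$ produced by the gradient term. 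This is precisely the role played by the potentials $H_i$ in the definition of $\mathcal{E}$, and it leaves the identity
\[
\mathcal{E}'(t) + \gamma\int_\Omega \p\,\w^2\,\d x = -\int_\Omega R\left|\px\right|^2\d x - \int_\Omega \big(\partialt \cone\, H_1(\p) + \partialt \ctwo\, H_2(\p)\big)\d x .
\]

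It then remains to bound the right-hand side by a constant independent of $\gamma$ and $\e$. The first term is harmless: $R$ is bounded in $L^\infty(Q_T)$ and $\|\px\|_{L^\infty(\Omega)}\leq C$ by Corollary~\ref{eq:L1_estimates_px_pxx}, so $\int_\Omega R|\px|^2\,\d x \le C$ on the bounded interval $\Omega$. For the last term I would substitute the fraction equations~\eqref{eq:system_ratios}, writing $\partialt \cone = \conex\,\px + \Theta_1$ with $\Theta_1 := \cone F_1(\p) + \ctwo G_1(\p) - (\cone)^2 F(\p) - \cone\ctwo G(\p)$ (and symmetrically for $\ctwo$). Because $0\le\ci\le1$ and the growth functions and the values $H_i(\p)$ are bounded, the reactive part $\int_\Omega \Theta_1 H_1(\p)\,\d x$ is immediately controlled, while the transport part is estimated by $\big|\int_\Omega \conex\,\px\,H_1(\p)\,\d x\big| \le \|\conex\|_{L^1(\Omega)}\,\|\px\|_{L^\infty(\Omega)}\,\|H_1\|_\infty$. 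I expect this last transport term to be the only genuinely delicate point: it is controlled solely through the simultaneous use of the uniform BV bound~\eqref{eq:estimate_bv} on the fractions and the $L^\infty$ bound on $\px$, both one-dimensional in nature, whereas the rest of the argument is purely algebraic. Collecting these bounds yields $\mathcal{E}'(t) + \gamma\int_\Omega \p\,\w^2\,\d x \le C$ and closes the proof.
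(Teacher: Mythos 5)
Your proposal is correct and follows essentially the same route as the paper: both differentiate the gradient term against $-\pxx$ (with the cubic term $\int_\Omega \pxx|\px|^2\,\d x$ vanishing as an exact derivative), exploit the choice $H_1'=F$, $H_2'=G$ to cancel the stiff cross terms $\gamma\int_\Omega R\,\p\,\w\,\d x$, and close the estimate using the fraction equations~\eqref{eq:system_ratios} together with the BV bounds~\eqref{eq:estimate_bv} and the Lipschitz bound on $\p$ from Corollary~\ref{eq:L1_estimates_px_pxx}. The only difference is organisational: the paper substitutes both evolution equations at once to form \eqref{eq:c1H1}--\eqref{eq:c2H2} and then sums, whereas you isolate the cancellation first and substitute afterwards, which is the same argument.
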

\begin{proof}
Consider the equation for the pressure \eqref{eq:p} and multiply by $- \tfrac{\partial^2 \p}{\partial x^2}$. Integration by parts yields
\begin{align}
\label{eq:laplacian_pressure}
 \frac{1}{2}\ddt \int_\Omega  \left|\px\right|^2 \d x +\gamma \int_\Omega \p \left|\pxx\right|^2 \d x + \gamma\int_\Omega \p\pxx R \, \d x = 0.
\end{align}
Moreover, using the equations for $\cone$ and $\ctwo$ we compute
\begin{equation}
\label{eq:c1H1}
\begin{split}
\frac{\partial \left( \cone H_1(\p) \right)}{\partial t} =& H_1(\p) \left( \conex \px + \cone F_1(\p) + \ctwo G_1(\p) - (\cone)^2F(\p) - \cone\,\ctwo G(\p)\right) \\ &+ \cone F(\p) \left[ \left|\px\right|^2 + \gamma \p \w \right],
\end{split}
\end{equation}
and
\begin{equation}
\label{eq:c2H2}
\begin{split}
\frac{\partial \left( \ctwo H_2(\p) \right)}{\partial t} &= H_2(\p) \left( \ctwox \px + \cone F_2(\p) + \ctwo G_2(\p) - (\ctwo)^2\,G(\p) - \cone \, \ctwo F(\p)\right)\\& + \ctwo G(\p) \left[\left|\px\right|^2 + \gamma \p \w \right].
\end{split}
\end{equation}

Summing \eqref{eq:laplacian_pressure}, \eqref{eq:c1H1} and \eqref{eq:c2H2}, and using the uniform bounds for $\cone$, $\ctwo$ and the reaction terms, we get
\begin{align*}
\ddt \int_\Omega \bigg( \frac12 \left| \px \right| ^2 -& \cone H_1(\p) - \ctwo H_2(\p) \bigg) \d x + \gamma \int_{\Omega} \p \w ^2 \d x \leq \\ & \quad C \int_{\Omega} \left[ \left|\px \right|^2 + \left| \px \right| \left( \left|\conex\right| + \left|\ctwox\right|\right) \right] \d x.
\end{align*}
Theorem \ref{thm_apriori}, together with the H\"older inequality and the Sobolev embeddings yield the desired bound \eqref{eq:energy_inequality}.
\end{proof}

\bibliography{Hele_Shaw}
\bibliographystyle{plain}

\end{document}